\begin{document}

\title[\null]
{Remarks on the extension of twisted Hodge metrics}

\author[\null]{Christophe Mourougane and Shigeharu TAKAYAMA} 

\thanks{A talk at the RIMS meeting 
``Bergman kernel and its applications to algebraic geometry'',
organized by T.\ Ohsawa, June 4 - 6, 2008.}

\maketitle
\baselineskip=18pt
\pagestyle{plain}


\theoremstyle{plain}
  \newtheorem{thm}{Theorem}[section]
  \newtheorem{main}[thm]{Main Theorem}
  \newtheorem{defthm}[thm]{Definition-Theorem}
  \newtheorem{prop}[thm]{Proposition}
  \newtheorem{lem}[thm]{Lemma}
  \newtheorem{cor}[thm]{Corollary}
  \newtheorem{conj}[thm]{Conjecture}
  \newtheorem{sublem}[thm]{Sublemma}
  \newtheorem{mainlem}[thm]{Main Lemma}
  \newtheorem{variant}[thm]{Variant}
\theoremstyle{definition}
  \newtheorem{dfn}[thm]{Definition}
  \newtheorem{exmp}[thm]{Example}
  \newtheorem{co-exmp}[thm]{Counter-Example}
  \newtheorem{prob}[thm]{Problem}
  \newtheorem{notation}[thm]{Notation}
  \newtheorem{quest}[thm]{Question}
  \newtheorem{setup}[thm]{Set up}
  \newtheorem{assum}[thm]{Assumption}
\theoremstyle{remark}
  \newtheorem{rem}[thm]{Remark}
  \newtheorem{com}[thm]{Comment}
\renewcommand{\theequation}{\thesection.\arabic{equation}}
\setcounter{equation}{0}

\newcommand{\BC}{{\mathbb{C}}}
\newcommand{\BN}{{\mathbb{N}}}
\newcommand{\BP}{{\mathbb{P}}}
\newcommand{\BQ}{{\mathbb{Q}}}
\newcommand{\BR}{{\mathbb{R}}}

\newcommand{\CalC}{{\mathcal{C}}}
\newcommand{\CalD}{{\mathcal{D}}}
\newcommand{\CE}{{\mathcal{E}}}
\newcommand{\CF}{{\mathcal{F}}}
\newcommand{\CH}{{\mathcal{H}}}
\newcommand{\CI}{{\mathcal{I}}}
\newcommand{\CJ}{{\mathcal{J}}}
\newcommand{\CK}{{\mathcal{K}}}
\newcommand{\CL}{{\mathcal{L}}}
\newcommand{\CM}{{\mathcal{M}}}
\newcommand{\CO}{{\mathcal{O}}}
\newcommand{\CS}{{\mathcal{S}}}
\newcommand{\CU}{{\mathcal{U}}}

\newcommand{\fm}{{\mathfrak{m}}}

\newcommand\ga{\alpha}
\newcommand\gb{\beta}
\def\th{\theta}
\newcommand\vth{\vartheta}
\newcommand\Th{\Theta}
\newcommand\ep{\varepsilon}
\newcommand\Ga{\Gamma}
\newcommand\w{\omega}
\newcommand\Om{\Omega}
\newcommand\Dl{\Delta}
\newcommand\del{\delta}
\newcommand\sg{\sigma}
\newcommand\vph{\varphi}
\newcommand\lam{\lambda}
\newcommand\Lam{\Lambda}

\newcommand\lra{\longrightarrow}
\newcommand\ra{\rightarrow}
\newcommand\ot{\otimes}
\newcommand\wed{\wedge}
\newcommand\ol{\overline}
\newcommand\isom{\, \wtil{\lra}\, }
\newcommand\sm{\setminus}

\newcommand\ai{\sqrt{-1}}
\newcommand\rd{{\partial}}
\newcommand\rdb{{\overline{\partial}}}
\newcommand\levi{\ai\rd\rdb}

\newcommand\wtil{\widetilde}
\newcommand\tf{\widetilde f}
\newcommand\tX{\widetilde X}
\newcommand\tx{\widetilde x}
\newcommand\tw{\widetilde \w}

\newcommand\what{\widehat}
\newcommand\hsg{\widehat \sg}

\newcommand\codim{\mbox{{\rm codim}}}
\newcommand\End{\mbox{{\rm End}}\, }
\newcommand\Exc{\mbox{{\rm Exc}}\, }
\newcommand\divisor{\mbox{{\rm div}}\, }
\newcommand\im{\mbox{{\rm im}}\, }
\newcommand\Ima{\mbox{{\rm Im}}\, }
\newcommand\Ker{\mbox{{\rm Ker}}\, }
\newcommand\Reg{\mbox{{\rm Reg}}\, }
\newcommand\Sing{\mbox{{\rm Sing}}\, }
\newcommand\Supp{\mbox{{\rm Supp}}\, }

\newcommand\Rq{R^qf_*(K_{X/Y} \ot E)}
\newcommand\F{R^qf_*(K_{X/Y} \ot E)}
\newcommand\FF{R^qf''_*(K_{X''/Y'} \ot E'')}

\newcommand\Xo{{X^\circ}}
\newcommand\Uo{{U^\circ}}
\newcommand\fo{{f^\circ}}
\newcommand\tauo{{\tau^\circ}}
\newcommand\go{{g_{\CO(1)}^\circ}}


\section{Introduction}

The aims of this text are to announce the result in a paper \cite{MT3}, 
to give proofs of some special cases of it, 
and to make comments and remarks for the proof given there.
Because the full proof in \cite{MT3} is much more involved and technical, 
we shall give a technical introduction and proofs for weaker statements 
in this text (see Theorem \ref{curve} and \ref{reduced}).
This text is basically independent from \cite{MT3}.

\subsection{Result in \cite{MT3}}

Our main concern is the positivity of direct image sheaves 
of adjoint bundles $\Rq$, for a K\"ahler morphism $f : X \lra Y$ endowed 
with a Nakano semi-positive holomorphic vector bundle $(E, h)$ on $X$.
In our previous paper \cite{MT2}, 
generalizing a result \cite{B} in case $q = 0$,
we obtained the Nakano semi-positivity of 
$\Rq$ with respect to the Hodge metric,
under the assumption that $f : X \lra Y$ is smooth.
However the smoothness assumption on $f$ is rather restrictive,
and it is desirable to remove it.

To state our result precisely, let us fix notations and recall basic facts.
Let $f : X \lra Y$ be a holomorphic map of complex manifolds.
A real $d$-closed $(1,1)$-form $\w$ on $X$ is said to be 
{\it a relative K\"ahler form} for $f$, if for every point $y \in Y$, 
there exists an open neighbourhood $W$ of $y$ and 
a smooth plurisubharmonic function $\psi$ on $W$ such that 
$\w + f^*(\ai\rd\rdb \psi)$ is a K\"ahler form on $f^{-1}(W)$.
A morphism $f$ is said to be {\it K\"ahler}, if there exists 
a relative K\"ahler form for $f$ (\cite[6.1]{Tk}),
and $f : X \lra Y$ is said to be a {\it K\"ahler fiber space},
if $f$ is proper, K\"ahler, and surjective with connected fibers,

\begin{setup} \label{basicMT}
(1)
Let $X$ and $Y$ be complex manifolds of $\dim X = n + m$ and 
$\dim Y = m$, and let $f : X \lra Y$ be a K\"ahler fiber space.
We do not fix a relative K\"ahler form for $f$, unless otherwise stated.
The {\it discriminant locus} of $f : X \lra Y$ is the minimum closed 
analytic subset $\Dl \subset Y$ such that $f$ is smooth over $Y \sm \Dl$.

(2)
Let $(E, h)$ be a Nakano semi-positive holomorphic vector bundle on $X$.
Let $q$ be an integer with $0 \le q \le n$.
By Koll\'ar \cite{Ko1} and Takegoshi \cite{Tk}, 
$\F$ is torsion free on $Y$,
and moreover it is locally free on $Y \sm \Dl$ (\cite[4.9]{MT2}).
In particular we can let $S_q \subset \Dl$ be 
the minimum closed analytic subset of $\codim_Y S_q \ge 2$ such that 
$\F$ is locally free on $Y \sm S_q$.
Let $\pi : \BP(\F|_{Y \sm S_q}) \lra Y \sm S_q$
be the projective space bundle, and let $\pi^*(\F|_{Y \sm S_q}) \lra \CO(1)$
be the universal quotient line bundle.

(3)
Let $\w_f$ be a relative K\"ahler form for $f$.
Then we have the Hodge metric $g$ on the vector bundle 
$\F|_{Y \sm \Dl}$ with respect to $\w_f$ and $h$ (\cite[\S 5.1]{MT2}).
By the quotient 
$\pi^*(\Rq|_{Y \sm \Dl}) \lra \CO(1)|_{\pi^{-1}(Y \sm \Dl)}$,
the metric $\pi^*g$ gives the quotient metric $\go$ on 
$\CO(1)|_{\pi^{-1}(Y \sm \Dl)}$.
The Nakano, even weaker Griffiths, semi-positivity of $g$
(by \cite[1.2]{B} for $q=0$, and by \cite[1.1]{MT2} for $q$ general) 
implies that $\go$ has a semi-positive curvature.
\qed
\end{setup}

In these notations, the main result in \cite{MT3} is as follows.

\begin{thm} \label{MT}
Let $f : X \lra Y$, $(E, h)$ and $0 \le q \le n$ be as in Set up \ref{basicMT}.

(1) Unpolarized case.
Then, for every relatively compact open subset $Y_0 \subset Y$,
the line bundle $\CO(1)|_{\pi^{-1}(Y_0 \sm S_q)}$ on 
$\BP(\Rq|_{Y_0 \sm S_q})$ 
has a singular Hermitian metric 
with semi-positive curvature, and which is smooth on $\pi^{-1}(Y_0 \sm \Dl)$.

(2) Polarized case.
Let $\w_f$ be a relative K\"ahler form for $f$.
Assume that there exists a closed analytic set 
$Z \subset \Dl$ of $\codim_Y Z \ge 2$ such that 
$f^{-1}(\Dl)|_{X \sm f^{-1}(Z)}$ is a divisor
and has a simple normal crossing support (or empty).
Then the Hermitian metric $\go$ on $\CO(1)|_{\pi^{-1}(Y \sm \Dl)}$
can be extended as a singular 
Hermitian metric $g_{\CO(1)}$ with semi-positive curvature
of $\CO(1)$ on $\BP(\Rq|_{Y \sm S_q})$. 
\end{thm}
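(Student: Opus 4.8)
The plan is to realize $\go$ as the smooth part of a metric whose local weight is quasi-plurisubharmonic on $\pi^{-1}(Y\sm\Dl)$ and to extend that weight across $\pi^{-1}(\Dl\sm S_q)$ by a removable-singularity argument. Fix once and for all a smooth Hermitian metric $g_0$ on $\CO(1)$ over all of $\BP(\F|_{Y\sm S_q})$ and write $\go = g_0\, e^{-\phi}$, so that $\phi$ is a smooth function on $\pi^{-1}(Y\sm\Dl)$ with $\levi\phi \ge -\Th(g_0) \ge -C\w$ for a fixed Kähler form $\w$ on a relatively compact neighbourhood; here I use that $\go$ already has semi-positive curvature by Set up \ref{basicMT}(3). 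By the removable-singularity theorem for quasi-plurisubharmonic functions, $\phi$ extends across the analytic set $\pi^{-1}(\Dl\sm S_q)$ as a quasi-psh function -- and then $g_{\CO(1)} := g_0\, e^{-\phi}$ is the desired extension with semi-positive curvature -- \emph{provided} $\phi$ is locally bounded from above near $\pi^{-1}(\Dl\sm S_q)$. The entire problem thus reduces to this one upper bound.

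To turn the upper bound into an estimate on the Hodge metric I would use the tautological identification $\pi_*\CO(1)=\F$. Near a point $y_0\in\Dl\sm S_q$ choose a local holomorphic frame $u_1,\dots,u_r$ of the locally free sheaf $\F$ on $U\ni y_0$; the induced sections $\tilde u_1,\dots,\tilde u_r$ of $\CO(1)$ generate it fibrewise over $\pi^{-1}(U)$, and on the chart where $\tilde u_1\ne 0$ a direct computation identifies $|\tilde u_1|^2_{\go}$ with the squared $g$-distance from $u_1(y)$ to the hyperplane defining the quotient, which is at most the Hodge norm $|u_1(y)|^2_{g}$. Since $|\tilde u_1|^2_{g_0}$ is smooth and bounded away from $0$ on this chart, this yields $\phi \le \log|u_1(y)|^2_g + O(1)$. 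Hence it suffices to prove that the Hodge norms $|u_i(y)|^2_g$ of the frame are locally bounded from above as $y\to\Dl\sm S_q$.

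This last estimate is where the polarization hypothesis enters. Over $U\sm Z$ the divisor $f^{-1}(\Dl)$ has simple normal crossing support, so after shrinking $U$ and choosing adapted coordinates $f$ is a normal-crossing degeneration. Each $u_i$, being a holomorphic section of $\F=\Rq$ across $\Dl$, is represented fibrewise by an $E$-valued $L^2$-harmonic form via the Hodge-theoretic/decomposition description of Takegoshi \cite{Tk} and Koll\'ar \cite{Ko1}, and the Hodge norm is the fibre integral $\int_{X_y}$ of its squared pointwise length against $h$. The twist by the \emph{relative} canonical bundle $K_{X/Y}$ is precisely the adjunction factor that tames this integral along the normal-crossing fibre $X_{y_0}$, and combining this with the Nakano semi-positivity of $(E,h)$ and the twisted $L^2$-estimates one expects the bound $|u_i(y)|^2_g\le C$ near $\Dl\sm(S_q\cup Z)$. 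Once $\phi$ is extended across $\pi^{-1}(\Dl\sm(S_q\cup Z))$, the only remaining locus is $\pi^{-1}(Z\sm S_q)$, which has $\codim\ge 2$ in $\BP(\F|_{Y\sm S_q})$ since $\codim_Y Z\ge 2$; across such a set the quasi-psh extension follows from the codimension-two removable-singularity theorem, completing the construction of $g_{\CO(1)}$.

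The main obstacle I expect is exactly the upper bound on the Hodge norms: it is \emph{not} a priori clear that $|u_i(y)|^2_g$ stays bounded, because the $L^2$-Hodge metric of a degenerating twisted Hodge structure typically grows like a power of $\log$ of the distance to $\Dl$, and such growth would make $\phi$ unbounded above and defeat the naive removable-singularity step. Controlling it requires the asymptotics of the Hodge metric in the normal-crossing (unipotent monodromy) regime -- nilpotent-orbit and norm-estimate techniques adapted to the twisted setting -- together with a careful check that the potentially growing eigendirections of $g$ do not obstruct the extension of the quotient weight on $\CO(1)$. This is the technical heart that forces the full argument of \cite{MT3} to be long; in the special cases of Theorems \ref{curve} and \ref{reduced} the degeneration is mild enough that this growth is absent or elementary, so the bounded-above reduction sketched here applies directly.
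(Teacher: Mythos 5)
Your reduction sends you in the wrong direction: the key inequality in your second paragraph is backwards. Writing $\go = g_0\, e^{-\phi}$, on the chart where $\tilde u_1\ne 0$ the weight is $\phi = \log|\tilde u_1|^2_{g_0} - \log|\tilde u_1|^2_{\go}$, so an upper bound on $\phi$ is equivalent to a \emph{lower} bound $|\tilde u_1|^2_{\go}\ge N>0$. Your (correct) observation that $|\tilde u_1|^2_{\go}$, being a squared distance to a hyperplane, is at most $|u_1(y)|^2_g$ therefore yields only a lower bound on $\phi$, which is useless for the removable-singularity step. Consequently the ``technical heart'' you identify --- upper bounds on the Hodge norms via nilpotent-orbit asymptotics, and the worry about logarithmic growth --- attacks the wrong problem: unboundedness of $|u_i|^2_g$ from above is harmless, while what must actually be proved is that the Hodge norm stays bounded away from zero near $\Dl$. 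This is precisely what the paper stresses (Remark \ref{defHodge}, Proposition \ref{weak}): the form $*\circ\CH(u)$ extends holomorphically and non-trivially across $f^{-1}(\Dl)$ while the Jacobian $f^*dt$ vanishes there, which forces $g(u,u)(t)$ to admit a positive lower bound.

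Even after correcting the direction, a second genuine difficulty remains untouched by your sketch: the lower bound must be \emph{uniform} in the section. For a quotient one has $g^\circ_L(\what e_r,\what e_r)(t) = |s(t)|^2\, g\bigl(u_{s(t)/|s(t)|}, u_{s(t)/|s(t)|}\bigr)(t)$ with coefficients $s(t)$ varying with $t$ (and, for $\CO(1)$, with the point of the fibre of $\BP(\F)$), so one needs $g(u_s,u_s)(t)\ge N$ simultaneously for all $s$ in the unit sphere $S^{2r-1}$ (this is the content of Lemmas \ref{Ft11}--\ref{Ft13}). The obstruction is that the exceptional set $Z_{u_s}$ appearing in the proof of Proposition \ref{weak} moves with $s$ and can sweep out a divisor in $\Dl$; handling this is exactly where the normal-crossing hypothesis of part (2), and in general the semi-stable reduction and the comparison of the two Hodge metrics carried out in \cite{MT3}, enter. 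Neither the direction of the required estimate nor this uniformity issue is visible in your outline, so the proposal does not reach the actual difficulty of the theorem.
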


Theorem \ref{MT}\,(1) is reduced to Theorem \ref{MT}\,(2) for
$f' = f \circ \mu : X' \lra Y$ after a modification $\mu : X' \lra X$.
Then however the induced map $f' : X' \lra Y$ is only locally K\"ahler 
in general.
Hence we need to restrict everything on relatively compact subsets
of $Y$ in Theorem \ref{MT}\,(1).

If in particular in Theorem \ref{MT}, $\Rq$ is locally free and 
$Y$ is a smooth projective variety, then the vector bundle 
$\Rq$ is pseudo-effective in the sense of \cite[\S 6]{DPS}.
This notion \cite[\S 6]{DPS} is a natural generalization of the fact that
on a smooth projective variety, a divisor $D$ is pseudo-effective
(i.e., a limit of effective divisors)
if and only if the associated line bundle $\CO(D)$ admits a singular 
Hermitian metric with semi-positive curvature. 
The above curvature property of $\CO(1)$ leads to the following
algebraic positivity of $\Rq$.

\begin{thm} \label{algebraic}
Let $f : X \lra Y$ be a surjective morphism with connected fibers
between smooth projective varieties, and let $(E, h)$ be 
a Nakano semi-positive holomorphic vector bundle on $X$.
Then the torsion free sheaf $\Rq$ is weakly positive over $Y \sm \Dl$
(the smooth locus of $f$), in the sense of Viehweg \cite[2.13]{Vi2}.
\end{thm}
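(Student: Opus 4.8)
The plan is to deduce the statement from the curvature positivity of $\CO(1)$ furnished by Theorem \ref{MT}\,(1), converting semi-positivity of a metric into the global generation of symmetric powers demanded by Viehweg's definition \cite[2.13]{Vi2}. Recall that weak positivity of $\Rq$ over $U := Y \sm \Dl$ means that for every ample line bundle $H$ on $Y$ and every $\alpha \in \BN$ there is some $\beta \in \BN$ such that the evaluation map $H^0(Y, \widehat{S}^{\alpha\beta}(\Rq) \ot H^\beta) \ot \CO_Y \lra \widehat{S}^{\alpha\beta}(\Rq) \ot H^\beta$ is surjective over $U$, where $\widehat{S}^k$ denotes the reflexive hull of the $k$-th symmetric power. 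Since $\codim_Y S_q \ge 2$ and these sheaves are reflexive, it suffices to produce the sections over $Y \sm S_q$ and to verify the generation over $U$; the sections then extend across $S_q$ by reflexivity.

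First I would pass to the locally free locus. Put $Y' = Y \sm S_q$, let $\pi : P = \BP(\Rq|_{Y'}) \lra Y'$ be the projective bundle with universal quotient $L = \CO(1)$, and write $r$ for the rank of $\Rq$. As $Y$ is projective, hence compact, Theorem \ref{MT}\,(1) applies with $Y_0 = Y$ and endows $L$ with a singular Hermitian metric $h_1$ of semi-positive curvature, smooth over $\pi^{-1}(U)$. Fixing $H$, $\alpha$ and setting $m = \alpha\beta$, the identification $\pi_* \CO(m) = S^m(\Rq)$ (and its reflexive hull across $S_q$) reduces the desired generation of $\widehat{S}^m(\Rq) \ot H^\beta$ at a point $y_0 \in U$ to the global generation of the line bundle $N := L^m \ot \pi^* H^\beta$ at every point $p$ of the (smooth) fibre $\pi^{-1}(y_0)$.

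Next I would produce these sections by a Nadel-type vanishing on the projective variety $\ol{P} = \BP(\Rq)$ over all of $Y$, after extending $h_1$ across $\pi^{-1}(S_q)$ as a semi-positive singular metric (possible since $\codim_Y S_q \ge 2$, so the local psh weights, being locally bounded above, extend). Using the relative canonical formula $\omega_{\ol P/Y} \cong \CO(-r) \ot \pi^* \det \Rq$ one writes
\[
N \ot \omega_{\ol P}^{-1} \;\cong\; \CO(m+r) \ot \pi^* \bigl( H^\beta \ot (\det \Rq)^{-1} \ot K_Y^{-1} \bigr),
\]
whose base twist is ample on $Y$ for $\beta \gg 0$. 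I would equip this bundle with the metric $h_1^{\,m+r}$ on $\CO(m+r)$, the pullback of a positive metric on the ample base twist, and a small $\ep$-perturbation by a genuine ample bundle $\CO(1) \ot \pi^* A_0$ (relative ampleness of $L$, with $\pi^* A_0^{\ep}$ absorbed into $H^\beta$) to gain the missing fibrewise strict positivity; adding a weight with an isolated logarithmic pole at $p$ of the correct order, Nadel vanishing gives $H^1(\ol P, N \ot \mathcal{I}) = 0$ and hence surjectivity of $H^0(\ol P, N) \lra N \ot \BC(p)$. Since $p$ ranges over $\pi^{-1}(U)$, where $h_1$ is smooth and the background multiplier ideal is trivial, $N$ is globally generated over $\pi^{-1}(U)$, and a single $\beta$ works uniformly. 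Descending by $\pi_*$ and extending sections across $S_q$ by reflexivity yields the global generation of $\widehat{S}^{\alpha\beta}(\Rq) \ot H^\beta$ over $U$, which is Viehweg's weak positivity.

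The main obstacle is the passage from the mere semi-positivity of $L$, available only on the projective-bundle locus over the codimension-$\ge 2$ open set $Y \sm S_q$, to the strict positivity needed to run Nadel vanishing on the compact (possibly singular over $S_q$) model $\ol P$: one must simultaneously extend the semi-positive metric across $\pi^{-1}(S_q)$, borrow fibrewise positivity from the relative ampleness of $\CO(1)$ and base positivity from $H$ through a uniform $\ep$-perturbation, and keep the multiplier ideals under control so that the generation holds precisely over $U = Y \sm \Dl$ and not merely over a smaller set. Alternatively, once $\Rq$ is recognized as pseudo-effective in the sense of \cite[\S 6]{DPS} via the semi-positive metric on $\CO(1)$, one may quote the implication there that a pseudo-effective sheaf on a projective variety is weakly positive, reducing the argument to the reflexive extension across $S_q$ and the localization of the generation statement to $U$.
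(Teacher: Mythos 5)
The paper itself does not prove Theorem \ref{algebraic} in this text: it only records that Theorem \ref{MT}\,(1) makes $\Rq$ pseudo-effective in the sense of \cite[\S 6]{DPS} and defers the deduction of weak positivity to \cite{MT3}. Your overall strategy --- take the semi-positive singular metric on $\CO(1)$ from Theorem \ref{MT}\,(1), exploit the fact that it is \emph{smooth} over $\pi^{-1}(Y\sm\Dl)$ so that the multiplier ideals are trivial there, and convert this into global generation of $\widehat S^{\alpha\beta}(\Rq)\ot H^{\beta}$ over $Y\sm\Dl$ by a vanishing/$L^2$ argument on the projectivized bundle --- is exactly the route the paper indicates, so on the level of architecture your proposal is the intended one.

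There is, however, a concrete gap in the execution: everything you do over $S_q$ is unjustified as written. You extend the psh weights of $h_1$ across $\pi^{-1}(S_q)$ ``since $\codim_Y S_q\ge 2$, so the local psh weights, being locally bounded above, extend''; but $\codim_Y S_q\ge 2$ gives no bound on the codimension of $\pi^{-1}(S_q)$ in $\ol P=\BP(\Rq)$ (the fibres of $\mathrm{Proj}(\mathrm{Sym}\,\Rq)$ can jump in dimension where $\Rq$ is not locally free, so $\pi^{-1}(S_q)$ may well be a divisor), and no reason is given why the weights are locally bounded above near that set. Moreover $\ol P$ need not be normal or even irreducible over $S_q$, so both the relative canonical formula $\omega_{\ol P/Y}\cong\CO(-r)\ot\pi^*\det\Rq$ and Nadel vanishing are being invoked on a space where they do not directly apply. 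The standard repair is to never compactify over $S_q$: either pass to a modification $\mu:Y'\lra Y$ on which $\mu^*\Rq$ has a locally free quotient of rank $r$ and run the vanishing argument on the resulting genuine $\BP^{r-1}$-bundle, transporting sections back by reflexivity of $\widehat S^{\alpha\beta}(\Rq)\ot H^\beta$ on the normal variety $Y$ (sections over $Y\sm S_q$ extend automatically); or produce the sections over $Y\sm S_q$ by an Ohsawa--Takegoshi type extension from points rather than by vanishing on a compact model. Your fallback of simply ``quoting'' \cite[\S 6]{DPS} is also too quick: what is proved there for pseudo-effective bundles is weaker than Viehweg weak positivity over the specific open set $Y\sm\Dl$, and pinning down that set requires precisely the smoothness of the metric over $\pi^{-1}(Y\sm\Dl)$ that Theorem \ref{MT}\,(1) provides, fed into the multiplier-ideal argument above.
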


See \cite[\S1]{MT3} for further introduction.

\subsection{Statement in this text} \label{this}

We would like to explain the proofs of the following two theorems
in this text.
Because there is no essential limitations of the number of pages,
we may repeat some arguments and make comments repetitiously.

\begin{setup} \label{basic}
(General set up.) \ 
Let $f : X \lra Y$ be a holomorphic map of complex manifolds,
which is proper, K\"ahler, surjective with connected fibers,
and $f$ is smooth over the complement $Y \sm \Dl$ of 
a closed analytic subset $\Dl \subset Y$. 
Let $\w_f$ be a relative K\"ahler form for $f$, and let $(E, h)$ 
be a Nakano semi-positive holomorphic vector bundle on $X$.
Let $q$ be a non-negative integer.

It is known by Koll\'ar \cite{Ko1} and Takegoshi \cite{Tk} that 
$\F$ is torsion free,
and moreover it is locally free where $f$ is smooth (\cite[4.9]{MT2}).
In particular we can let $S_q \subset \Dl$ be 
the minimum closed analytic subset of $\codim_Y S_q \ge 2$ such that 
$\F$ is locally free on $Y \sm S_q$.
Once we take a relative K\"ahler form $\w_f$ for $f$, we then have
the Hodge metric $g$ on the vector bundle $\F|_{Y \sm \Dl}$ 
with respect to $\w_f$ and $h$ (\cite[\S 5.1]{MT2} or Remark \ref{defHodge}).
\qed
\end{setup}

\begin{thm} \label{curve}
In Set up \ref{basic}, assume further that $\dim Y = 1$.
Let $L$ be a quotient holomorphic line bundle of $\Rq$.
Then $L$ has a singular Hermitian metric with semi-positive curvature,
whose restriction on $Y \sm \Dl$ is the quotient metric of 
the Hodge metric $g$ on $\Rq|_{Y \sm \Dl}$.
\end{thm}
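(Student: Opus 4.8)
The plan is to reduce the statement to a removable-singularity problem for a subharmonic weight, and then to isolate the one genuinely analytic input, namely a lower bound for the Hodge metric near the discriminant.

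First I would localise. Since $\dim Y=1$, the discriminant $\Dl$ is a finite set of points and $S_q$, having $\codim_Y S_q\ge 2$, is empty; hence $\Rq$ is locally free on all of $Y$ and $L$ is an honest quotient line bundle. Fix $p\in\Dl$, a coordinate disc $(W;t)$ with $W\cap\Dl=\{p\}$, and a holomorphic frame $e$ of $L$ on $W$. On $W^{\ast}:=W\setminus\{p\}$ write the quotient Hodge metric in this frame as $\|e\|^2_{g_L}=e^{-\varphi}$, so that $\varphi=-\log\|e\|^2_{g_L}$. By the Nakano, even Griffiths, semi-positivity of $g$ (\cite{MT2} for general $q$, \cite{B} for $q=0$; see Set up \ref{basicMT}\,(3)) the quotient metric has semi-positive curvature, i.e.\ $\varphi$ is subharmonic on $W^{\ast}$. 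A subharmonic function on a punctured disc that is \emph{bounded above} near the puncture extends uniquely to a subharmonic function across $\{p\}$, the point being polar; this extension is the sought singular metric on $L|_W$, and patching over the finitely many points of $\Dl$ proves the theorem. Everything therefore reduces to one estimate: $\varphi$ is bounded above near $p$, equivalently $\|e\|_{g_L}$ is bounded below by a positive constant.

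To reach that estimate I would pass to the full Hodge metric. Choose a holomorphic frame $e_1,\dots,e_r$ of $\Rq$ on $W$ and let $G=\big(g(e_i,e_j)\big)$ be its Gram matrix on $W^{\ast}$. It suffices to prove $G\succeq c\,\mathrm{Id}$ for some $c>0$ near $p$: dualising the surjection $\Rq\to L$ to the inclusion $L^{\ast}\subset(\Rq)^{\ast}$, the induced dual Hodge metric is then bounded above, and since for a line-bundle quotient $\|e\|_{g_L}=\|e^{\ast}\|_{g_L^{\ast}}^{-1}$, the required lower bound for $\|e\|_{g_L}$ follows. The opposite phenomenon is harmless: $G$ may blow up, so that $\|e\|_{g_L}\to\infty$ and $\varphi\to-\infty$, producing a positive Lelong mass of the curvature current at $p$, exactly as for a degenerating elliptic fibration; only $\varphi\to+\infty$ must be excluded, which is precisely what the lower bound $G\succeq c\,\mathrm{Id}$ rules out.

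The lower bound $G\succeq c\,\mathrm{Id}$ is the main obstacle, and I would obtain it by localising the defining fibre integrals. The entries of $G$ are fibre integrals $g(e_i,e_j)(t)=\int_{X_t}\{e_i\wed\ol{e_j}\}_h$, so I fix once and for all a relatively compact coordinate ball $U\subset X$ that meets the central fibre $X_p=f^{-1}(p)$ in an open set on which the frame $\{e_i\}$ is pointwise linearly independent and on which $h$ and $\w_f$ are bounded below. For $t\neq p$ close to $p$ the smooth fibres $X_t$ sweep through $U$ with $\mathrm{vol}(X_t\cap U)$ bounded below, whence $G(t)\succeq G_U(t):=\big(\int_{X_t\cap U}\{e_i\wed\ol{e_j}\}_h\big)$, and $G_U(t)\to G_U(p)\succ0$ as $t\to p$; this gives $G(t)\succeq c\,\mathrm{Id}$ near $p$. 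For $q=0$ this argument is essentially complete. The real difficulty, and the point where the Hodge-theoretic machinery of \cite{MT2} is indispensable, is the case $q>0$: there each $e_i(t)$ must be represented by the harmonic form furnished by Takegoshi's theory, and one has to show that these harmonic representatives remain uniformly nondegenerate on the fixed region $U$ as $t\to p$, i.e.\ that the $L^2$ mass of the harmonic projection does not escape into the degenerating part of the fibre. Establishing this uniform nondegeneracy near the singular fibre is the crux of the proof.
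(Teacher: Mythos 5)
Your overall architecture agrees with the paper's: localize at a point of $\Dl$, note that Griffiths semi-positivity of the Hodge metric makes the weight $\varphi$ of the quotient metric subharmonic off the puncture, and reduce everything to an upper bound for $\varphi$ near the puncture, i.e., to a uniform positive lower bound for the Gram matrix $G(t)$ of a frame of $\Rq$ --- which is exactly the paper's uniform estimate $g(u_s,u_s)(t)\ge N$ for all $s\in S^{2r-1}$ (Lemmas \ref{Ft11}--\ref{Ft13}). Your reduction via $g^\circ_L(\what e_r,\what e_r)=1/(G^{-1})_{r\bar r}\ge\lambda_{\min}(G)$ is correct and equivalent to the paper's Lemma \ref{Ft13}.

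The gap is in the proof of $G\succeq c\,\mathrm{Id}$, and it is not confined to the $q>0$ case you flag at the end. You propose to fix \emph{one} ball $U$ meeting the central fibre on which the frame $\{e_i\}$ is ``pointwise linearly independent'' and on which $G_U(t)\to G_U(p)\succ0$. Such a ball need not exist. Already for $q=0$ and $E=\CO_X$ the $e_i$ are $r$ holomorphic sections of the line bundle $K_{X/Y}$, hence pointwise proportional everywhere as soon as $r\ge2$; positive definiteness of the Gram matrix comes from linear independence in $H^0$ of the fibre, not pointwise independence, and the locus where a given combination $u_s=\sum s_ie_i$ is non-degenerate genuinely depends on $s$ (with a reducible central fibre, different $u_s$ may vanish identically on different components, so no single chart works for all directions). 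Moreover, when the central fibre is multiple ($t=z_{n+1}^b$ with $b\ge2$) the relative forms $\sg_s$ with $\sg_s\wed f^*dt=*\circ\CH(u_s)$ acquire poles of order up to $b-1$ along the central fibre, so neither ``pointwise independence on $X_p\cap U$'' nor the convergence $G_U(t)\to G_U(p)$ makes sense as stated. The paper supplies precisely the missing mechanism: for each $s_0\in S^{2r-1}$ it uses Takegoshi's theorem (Proposition \ref{Takegoshi}) to extend $*\circ\CH(u_{s_0})$ holomorphically across $f^{-1}(\Dl)$, picks a component $B$ of $f^{-1}(0)$ along which this extension does not vanish to order $\ge b$, and exploits the fact --- special to $\dim Y=1$ --- that the divisor of the relevant coefficient is $-p_0B|_U$ with \emph{no} extra effective part $D$, so that this coefficient is bounded below in modulus near a general point of $B$ even though it may have a pole there; the chart depends on $s_0$, and the uniform bound then follows by continuity in $s$ and compactness of $S^{2r-1}$. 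Without this $s$-dependent choice of chart and the pole/zero analysis of $\sg_s$, your lower bound does not follow; the ``uniform nondegeneracy of harmonic representatives'' you defer is exactly this point, and it is needed for $q=0$ as well.
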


\begin{thm} \label{reduced}
In Set up \ref{basic}, assume further that 
$f$ has reduced fibers in codimension 1 on $Y$, 
i.e., there exists a closed analytic set $Z \subset \Dl$ of 
$\codim_Y Z \ge 2$ such that every fiber of $y \in Y \sm Z$ is reduced.
Let $L$ be a holomorphic line bundle on $Y$ with a surjection
$\Rq|_{Y \sm Z} \lra L|_{Y \sm Z}$.
Then $L$ has a singular Hermitian metric with semi-positive curvature,
whose restriction on $Y \sm \Dl$ is the quotient metric of 
the Hodge metric $g$ on $\Rq|_{Y \sm \Dl}$.
\end{thm}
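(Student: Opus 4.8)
The plan is to reduce the assertion to the one-dimensional Theorem \ref{curve} by slicing the base transversally to $\Dl$ at general points, and then to appeal to removable singularity theorems for plurisubharmonic functions. First, on $Y\sm\Dl$ the surjection $\Rq\to L$ together with the Hodge metric $g$ defines a quotient metric $g_L^\circ$ on $L|_{Y\sm\Dl}$. Since $g$ is Griffiths semi-positive by \cite[1.1]{MT2}, this quotient line bundle metric has semi-positive curvature; equivalently, in any local holomorphic frame $e_L$ of $L$ the weight $\vph:=-\log|e_L|^2_{g_L^\circ}$ is plurisubharmonic on $Y\sm\Dl$. The whole problem is to extend $\vph$ to a plurisubharmonic function on $Y$: such an extension is the local weight of a singular metric $g_L$ on $L$ with semi-positive curvature whose restriction to $Y\sm\Dl$ is $g_L^\circ$, which is exactly the claim.

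A plurisubharmonic function extends across an analytic set of $\codim\ge 2$ automatically, and across a divisor as soon as it is locally bounded above. As $S_q$, $Z$ and the non-divisorial part of $\Dl$ all have codimension $\ge 2$ in $Y$, it therefore suffices to prove that $\vph$ is locally bounded above near a general point $y_0$ of each codimension-one component of $\Dl$. Such a $y_0$ may be chosen to be a smooth point of $\Dl$ lying outside $S_q\cup Z$, over which the fibre $X_{y_0}$ is reduced and at which $\Rq\to L$ is surjective.

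Choose local coordinates $(t,s)=(t,s_1,\dots,s_{m-1})$ centred at $y_0$ with $\Dl=\{t=0\}$, and for $s$ small let $W_s=\{(t,s):|t|<\ep\}$ be the transversal disc and $f_s\colon f^{-1}(W_s)\to W_s$ the restricted family. Because $X_{y_0}$ is reduced, for general $y_0$ the space $f^{-1}(W_s)$ is smooth, one has $K_{f^{-1}(W_s)/W_s}\cong K_{X/Y}|_{f^{-1}(W_s)}$, and cohomology and base change identify $R^qf_{s*}(K_{f^{-1}(W_s)/W_s}\ot E)$ with $\F|_{W_s}$; under this identification $\Rq\to L$ restricts to a surjection onto $L|_{W_s}$, and the $f_s$-Hodge metric agrees with $g$ on $W_s\sm\{0\}$, both being the intrinsic $L^2$-metric on the common fibres. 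Thus $(f_s,L|_{W_s})$ satisfies Set up \ref{basic} with $\dim W_s=1$, so Theorem \ref{curve} provides a singular metric on $L|_{W_s}$ of semi-positive curvature restricting to $g_L^\circ$ on $W_s\sm\{0\}$. Its weight is subharmonic on $W_s$ and coincides with $\vph(\,\cdot\,,s)$ off $t=0$, so $\vph(\,\cdot\,,s)$ extends subharmonically across $t=0$. By the maximum principle, $\vph(t,s)\le\max_{|t'|=\ep/2}\vph(t',s)$ for $|t|\le\ep/2$, and the right-hand side is continuous in $s$ (it is evaluated on $Y\sm\Dl$, where $g_L^\circ$ is continuous); hence $\vph$ is locally bounded above near $y_0$, and the desired extension follows as in the previous paragraph.

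The genuine analytic content is packaged inside Theorem \ref{curve}, whose proof contains the estimate that the quotient Hodge metric does not collapse over a degenerating fibre, i.e.\ that $\vph$ remains bounded above. Granting that, the main obstacle here is the faithfulness of the slicing at the level of the direct image: one must secure both identifications $K_{f^{-1}(W_s)/W_s}\cong K_{X/Y}|_{f^{-1}(W_s)}$ and $R^qf_{s*}(\,\cdot\,)\cong\F|_{W_s}$, together with the matching of Hodge metrics. This is exactly the step at which reducedness of the fibres in codimension one is indispensable: for non-reduced fibres the relative dualizing sheaf and base change for $R^qf_*$ no longer restrict correctly, and one is instead forced to pass to a semistable (simple normal crossing) model, which is the route taken for the general Theorem \ref{MT} in \cite{MT3}.
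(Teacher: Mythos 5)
The architecture of your reduction is attractive, but it rests on one unproved identification that is in fact the crux of the whole problem: the assertion that ``cohomology and base change identify $R^qf_{s*}(K_{f^{-1}(W_s)/W_s}\ot E)$ with $\F|_{W_s}$'' \emph{over the entire disc $W_s$, including the point $W_s\cap\Dl$}. Away from $\Dl$ this is standard, and the adjunction $K_{f^{-1}(W_s)/W_s}\cong K_{X/Y}|_{f^{-1}(W_s)}$ holds whenever the slice is smooth, reduced fibres or not; but at $t=0$ the base change morphism $\beta\colon \F|_{W_s}\lra R^qf_{s*}(K_{f^{-1}(W_s)/W_s}\ot E)$ is a priori only an injection of locally free sheaves on the disc which is an isomorphism off the origin, and its determinant may vanish at $t=0$. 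If it does, your argument breaks in two equivalent ways. The frame $e_1,\dots,e_r$ of $\F$ no longer restricts to a frame of the sliced direct image at $t=0$, so the surjection onto $L|_{W_s}$ transported through $\beta^{-1}$ acquires a pole of some order $k>0$ there; applying Theorem \ref{curve} after clearing the pole bounds $|t|^{-2k}g^\circ_L$ from below, i.e.\ yields only $g^\circ_L\ge N|t|^{2k}$, which is useless for bounding $\vph$ from above. Equivalently, the uniform estimate inside the proof of Theorem \ref{curve} runs over unit vectors with respect to a frame of the \emph{sliced} sheaf, and the coordinate vector of $P(e_r)$ in that frame may tend to $0$ as $t\to 0$. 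You offer no mechanism by which reducedness of the fibres forces $\beta$ to be an isomorphism at $t=0$ (that is a question about jumping of $h^q$ along the slice, not about the scheme structure of the fibre). Note also that if base change held automatically for general transversal slices, your argument would prove the statement \emph{without} the reducedness hypothesis and would reduce all of Theorem \ref{MT} to the curve case --- which is precisely what does not happen in \cite{MT3}, where a semi-stable reduction is required.

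For comparison, the paper does not slice. It works directly over the $m$-dimensional base in Set up \ref{local} with $f^*\Dl=\sum B_i$ reduced and proves the analogue of Lemma \ref{Ft11} there: for $s_0\in S^{2r-1}$ the section $*\circ\CH(u_{s_0})\in H^0(X,\Om_X^{n+m-q}\ot E)$ cannot vanish along every $B_i\cap f^{-1}(0)$ --- this is where reducedness enters, since otherwise it would vanish at $t=0$ as an element of the $H^0(Y,\CO_Y)$-module --- so there is a point $x_0\in f^{-1}(0)$ where it is non-zero; because all multiplicities are $1$, the coefficients of the relative form $\sg_s$ extend holomorphically across $f^{-1}(\Dl)$ near $x_0$ and one of them stays uniformly bounded away from $0$ for $s$ near $s_0$, giving $g(u_s,u_s)(t)\ge a^{q+1}A^2(\pi\ep^2)^n$ and hence Lemma \ref{Ft13+} via compactness of $S^{2r-1}$. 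To rescue your route you would have to prove the base-change isomorphism at $t=0$; as written, that step is a gap of essentially the same depth as the theorem itself.
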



The above assumptions:\ 
$\dim Y = 1$, and/or with reduced fibers, 
or even fibers are semi-stable,
are quite usual in algebraic geometry.
In this sense, the assumptions in Theorem \ref{curve} and \ref{reduced}
are not so artificial.


\subsection{Complement}

Here is a comment on the relation between the statements in \S 1.1
and those in \S 1.2.
Although we will not give proofs, we can pursue the method of proof
of Theorem \ref{curve} and \ref{reduced} to show the following two 
statements, as we show Theorem \ref{MT} in \cite{MT3}.

\begin{thm} \label{curve+}
In Set up \ref{basic}, assume further that $\dim Y = 1$.
Then the line bundle $\CO(1)$ for 
$\pi : \BP(\Rq) \lra Y$ has a singular Hermitian metric $g_{\CO(1)}$ 
with semi-positive curvature,
and whose restriction on $\pi^{-1}(Y \sm \Dl)$ is the quotient metric 
$\go$ of $\pi^*g$,
where $g$ is the Hodge metric with respect to $\w_f$ and $h$.
\end{thm}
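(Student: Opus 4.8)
The plan is to localize the problem over $Y$, reduce the construction of $g_{\CO(1)}$ to the extension of a single plurisubharmonic weight across the fibre over a point of $\Dl$, and then isolate the one uniform estimate on which the whole argument turns.

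First I would use that $\dim Y=1$. Then $S_q$ of Set up \ref{basic}, being a closed analytic subset with $\codim_Y S_q\ge 2$, is empty, so $\Rq$ is locally free on all of $Y$ and $\Dl$ is a discrete set of points. Since the assertion is local over $Y$, fix $o\in\Dl$ and a coordinate disc $(D;t)$ centred at $o$ with $D\cap\Dl=\{o\}$, and trivialise $\Rq|_D\isom\CO_D^{\oplus r}$, where $r=\mathrm{rank}\,\Rq$. This identifies $\pi\colon\BP(\Rq)|_D\isom\BP^{r-1}\times D$, with $\CO(1)$ the relative hyperplane bundle; fix a smooth reference metric $h_0$ on $\CO(1)$ over $\BP^{r-1}\times D$. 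On $\pi^{-1}(D\sm\{o\})=\BP^{r-1}\times(D\sm\{o\})$ write $\go=e^{-\varphi}h_0$; by the semi-positivity of the curvature of $\go$ recorded in Set up \ref{basicMT}(3) (a consequence of the Griffiths semi-positivity of the Hodge metric $g$), the weight $\varphi$ is plurisubharmonic there.

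The construction is then reduced to extending $\varphi$ plurisubharmonically across the central fibre $\BP^{r-1}\times\{o\}$: the extended weight defines $g_{\CO(1)}$ with semi-positive curvature, and the local pieces agree with $\go$ on $\pi^{-1}(Y\sm\Dl)$, hence glue to a global metric. Since $\BP^{r-1}\times\{o\}$ is a smooth divisor, the classical removable-singularity theorem for plurisubharmonic functions applies: provided $\varphi$ is locally bounded from above near $\BP^{r-1}\times\{o\}$, its upper semicontinuous regularisation is plurisubharmonic on $\BP^{r-1}\times D$ and coincides with $\varphi$ off the central fibre. Everything thus comes down to the single estimate
\[
\limsup_{t\to 0}\ \sup_{[v]\in\BP^{r-1}}\varphi([v],t)<+\infty ,
\]
which is a uniform lower bound for the quotient Hodge metric $\go$, equivalently a uniform control on the rate at which $g$ may degenerate on $\Rq|_{D\sm\{o\}}$ as $t\to 0$.

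Here lies the main obstacle. For each fixed $[v]\in\BP^{r-1}$ the constant section $t\mapsto([v],t)$ corresponds, under the chosen trivialisation, to a quotient line bundle $L_{[v]}$ of $\Rq|_D$, and $\varphi([v],\cdot)$ is precisely the weight of the quotient Hodge metric on $L_{[v]}$; Theorem \ref{curve} therefore already yields its subharmonic extension across $o$, and in particular the pointwise bound $\limsup_{t\to 0}\varphi([v],t)<+\infty$ for every $[v]$. The difficulty is to make this bound uniform in $[v]$, as pointwise upper bounds for a plurisubharmonic function do not of themselves furnish a local upper bound near the divisor. I would obtain the uniform estimate by carrying the fibre parameter $[v]$ through the asymptotic analysis of the Hodge metric employed for Theorems \ref{curve} and \ref{reduced}: estimate the quotient norm of a local frame of $\CO(1)$ by harmonic, respectively $L^2$, representatives on the fibres $X_t$, and bound their growth as $t\to 0$ uniformly over the compact base $\BP^{r-1}$. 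Securing this uniformity against the degeneration of $g$ is the crux; granted it, the extension described above finishes the proof.
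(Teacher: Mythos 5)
The paper does not actually prove Theorem \ref{curve+}: it is stated in \S 1.3 with the remark that one must pursue the method of Theorem \ref{curve}, and that beyond Lemma \ref{Ft11} one needs a ``uniform estimate which does not depend on rank 1 quotients $L$''. Your reduction is exactly this strategy and is sound as far as it goes: $S_q=\emptyset$ since $\dim Y=1$, localization to a disc $D$ with $\Rq|_D\cong\CO_D^{\oplus r}$ and $\BP(\Rq)|_D\cong\BP^{r-1}\times D$, reduction to an upper bound for the weight $\varphi$ of $\go$ that is uniform over the fibre $\BP^{r-1}$, then removable singularities across the central fibre. (One small correction: $\varphi$ is not plurisubharmonic but only $\Theta_{h_0}$-plurisubharmonic, since the curvature of the reference metric $h_0$ is nonzero along the fibres; this changes nothing locally, as one may absorb $\Theta_{h_0}$ into a local potential.)

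The genuine gap is that you stop at what you yourself call the crux. The uniform bound $\sup_{[v]}\varphi([v],t)\le C$ for $t$ near $0$ is the entire content of the theorem beyond Theorem \ref{curve}, and your proposal only announces that you ``would obtain'' it by rerunning the asymptotic analysis of the Hodge metric with the parameter $[v]$ carried along; no such argument is supplied, so nothing beyond the pointwise statement of Theorem \ref{curve} has been established. What you have overlooked is that the required uniformity is already contained in Lemma \ref{Ft12}: it provides a neighbourhood $Y'$ of the point of $\Dl$ and a constant $N>0$ with $g(u_s,u_s)(t)\ge N$ for \emph{all} $s\in S^{2r-1}$ and $t\in Y'\sm\Dl$, i.e.\ the Gram matrix $G(t)=\bigl(g(e_i,e_j)(t)\bigr)$ satisfies $G(t)\ge N\cdot\mathrm{Id}$. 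At a point $([\lam],t)\in\BP^{r-1}\times(Y'\sm\Dl)$ the quotient metric on $\CO(1)$ is $\go(\what e_i,\what e_i)=|\lam_i|^2/\|\lam\|^2_{g^\vee(t)}$, and $G(t)\ge N\,\mathrm{Id}$ gives $\|\lam\|^2_{g^\vee(t)}\le N^{-1}|\lam|^2$, hence $\go\ge N\,h_0$ uniformly on $\pi^{-1}(Y'\sm\Dl)$ for the standard reference metric $h_0$, that is $\varphi\le-\log N$ there. This is precisely the estimate your extension step needs; note that Lemma \ref{Ft12} is quantified over the whole sphere $S^{2r-1}$, whereas the proof of Lemma \ref{Ft13} for a single quotient $L$ only exploits the directions $s(t)/|s(t)|$, and it is exactly this surplus of uniformity that upgrades Theorem \ref{curve} to Theorem \ref{curve+}. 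Without making that connection explicit (or supplying the parametrized asymptotics you allude to), the proposal is incomplete.
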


\begin{thm} \label{reduced+}
In Set up \ref{basic}, assume further that 
$f$ has reduced fibers in codimension 1 on $Y$.
Then the line bundle $\CO(1)$ for 
$\pi : \BP(\Rq|_{Y \sm S_q}) \lra Y \sm S_q$ has a singular 
Hermitian metric $g_{\CO(1)}$ with semi-positive curvature,
and whose restriction on $\pi^{-1}(Y \sm \Dl)$ is the quotient metric 
$\go$ of $\pi^*g$, where $g$ is the Hodge metric 
with respect to $\w_f$ and $h$.
\end{thm}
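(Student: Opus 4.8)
The plan is to produce $g_{\CO(1)}$ by extending the weight of $\go$ across the analytic set $\pi^{-1}(\Dl \sm S_q)$, with the entire difficulty concentrated in a single local upper bound near the discriminant. First I would record the reduction. Since $\Rq$ is locally free on $Y \sm S_q$, the set $\pi^{-1}(\Dl \sm S_q)$ is a closed analytic subset of $\BP(\Rq|_{Y \sm S_q})$ of codimension $\ge 1$, and by \cite{B} (for $q=0$) and \cite[1.1]{MT2} (for general $q$) the Griffiths semi-positivity of the Hodge metric $g$ makes $\go$ a smooth metric with semi-positive curvature on the complement $\pi^{-1}(Y \sm \Dl)$; that is, a local weight $\varphi$ of $\go$ is plurisubharmonic there. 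By the removable-singularity theorem for plurisubharmonic functions across analytic sets, $\varphi$ extends to a plurisubharmonic function on $\BP(\Rq|_{Y \sm S_q})$ — defining a singular metric $g_{\CO(1)}$ with semi-positive curvature which automatically restricts to $\go$ — \emph{provided} $\varphi$ is locally bounded above near $\pi^{-1}(\Dl \sm S_q)$. So everything reduces to that upper bound.

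Next I would make the weight explicit. Fix a general point $y_0$ of a codimension-one component of $\Dl \sm S_q$, choose local coordinates on $Y$ together with a holomorphic frame $\sigma_1, \dots, \sigma_r$ of $\Rq$ near $y_0$ (available since $\Rq$ is locally free there), and use it to trivialize $\BP(\Rq)$ near $\pi^{-1}(y_0)$. In the resulting homogeneous fibre coordinates $a = (a_1, \dots, a_r)$, the weight of the quotient metric is $\varphi(y,a) = \log \langle G(y)^{-1} a, a \rangle$ up to a smooth bounded term, where $G(y) = (\langle \sigma_i, \sigma_j \rangle_g(y))$ is the Gram matrix of the Hodge metric. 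Hence, uniformly for $a$ ranging over a compact subset of the fibre, $\varphi$ is bounded above near $\pi^{-1}(y_0)$ if and only if the inverse Gram matrix $G(y)^{-1}$ is bounded, i.e. if and only if the Hodge metric $G(y)$ is bounded \emph{below} by a positive constant as $y \to y_0$.

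The hard part is precisely this lower bound, and it is where the reduced-fibre hypothesis is indispensable. I would restrict to a general one-parameter disc $\gamma$ through $y_0$ transverse to $\Dl$; by the assumption that the fibres are reduced in codimension one, the induced family over $\gamma$ has a reduced singular fibre, placing us exactly in the one-dimensional situation of Theorem \ref{curve+}. The Hodge norm of a holomorphic section of $\Rq$ equals the $L^2$ norm of its fibrewise harmonic representative (Takegoshi's harmonic theory), so the task is to show these $L^2$ norms do not decay to $0$ as the parameter $t \to 0$. Because the singular fibre is reduced, $K_{X/Y} \ot E$ restricts to it without fractional or multiplicity corrections, so a local holomorphic frame of $\Rq$ extends across $t=0$ with harmonic representatives whose $L^2$ norms grow at most like a power of $\log(1/|t|)$, with no factor $|t|^a$, $a>0$; in particular they stay bounded below. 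The genuine obstacle is to control the \emph{whole} Gram matrix, not merely its diagonal: one must produce a frame adapted to the limiting behaviour — the twisted analogue of a basis adapted to the monodromy weight filtration of Schmid's nilpotent-orbit theory — together with the two-sided asymptotic estimate. This is the step that uses the unipotence forced by reducedness rather than the mere quasi-unipotence available in general, and it is the crux on which the whole argument turns.

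Finally I would assemble the conclusion. The estimate along general transverse discs yields the local upper bound on $\varphi$ along the codimension-one part of $\Dl \sm S_q$, which by the first paragraph lets $\varphi$ extend across that locus. The remaining bad set lies in codimension $\ge 2$ in $Y \sm S_q$, where the now-extended $\varphi$ is plurisubharmonic off an analytic set of codimension $\ge 2$ and still bounded above, hence extends by the classical removability across such sets. The resulting plurisubharmonic weight defines $g_{\CO(1)}$ with semi-positive curvature on $\CO(1)$, agreeing with $\go$ on $\pi^{-1}(Y \sm \Dl)$; since the extension of a plurisubharmonic function is unique, the local constructions patch to a global metric over $\BP(\Rq|_{Y \sm S_q})$.
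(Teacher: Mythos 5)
Your overall skeleton is the right one, and it matches the intended argument: reduce to showing that the weight of $\go$ is locally bounded above near $\pi^{-1}(\Dl \sm S_q)$, equivalently that the Gram matrix $G(y)=(g(e_i,e_j)(y))$ of the Hodge metric in a local holomorphic frame of $\F$ stays bounded below by a positive multiple of the identity as $y$ approaches a general point of $\Dl$, and then conclude by Riemann- and Hartogs-type removability (Lemma \ref{codim2}). But the crux --- the uniform lower bound $g(u_s,u_s)(t)\ge N$ for all $s\in S^{2r-1}$ --- is exactly the step you do not prove. You defer it to ``the twisted analogue of a basis adapted to the monodromy weight filtration of Schmid's nilpotent-orbit theory'', and that machinery is not available here: for a general Nakano semi-positive $(E,h)$ the fibers of $\F$ do not underlie a variation of Hodge structure, there is no Gauss--Manin connection or monodromy, hence no nilpotent orbit theorem to invoke (and even in the untwisted case, reduced fibers alone do not force unipotent monodromy without a normal crossing hypothesis). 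The intermediate assertion that the harmonic representatives have $L^2$ norms growing at most like a power of $\log(1/|t|)$ ``and in particular stay bounded below'' is a non sequitur: an upper bound of logarithmic type says nothing about a lower bound, and the lower bound is the entire content of the theorem. Finally, establishing the estimate only along general one-parameter transverse discs does not give a bound on a full neighbourhood of $y_0$ unless it is uniform in the disc, which you do not address.

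The actual mechanism, which the paper carries out for Theorems \ref{curve} and \ref{reduced} and which is what ``pursuing the method'' means for Theorem \ref{reduced+}, is elementary and uses no asymptotic Hodge theory. By Takegoshi's relative hard Lefschetz (Proposition \ref{Takegoshi}), each section $u_s=\sum s_ie_i$ has a \emph{global holomorphic} representative $*\circ\CH(u_s)\in H^0(X,\Om_X^{n+m-q}\ot E)$ depending linearly, hence continuously, on $s$. Writing it as $\sg_s\wed f^*dt$ near a well-chosen smooth point $x_0$ of a component $B$ of $f^{-1}(\Dl)$, the reducedness hypothesis makes $f^*dt$ equal to $dz_{n+1}\wed\cdots\wed dz_{n+m}$ in adapted coordinates (no Jacobian factor $z_{n+m}^{b-1}$), so the coefficients $\sg_{sI,i}$ extend holomorphically across $B$ and at least one is non-vanishing at $x_0$ for $s=s_0$, hence bounded below on a fixed polydisc $U(x_0,\ep)$ uniformly for $s$ near $s_0$. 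Integrating over the fibers gives $g(u_s,u_s)(t)\ge a^{q+1}A^2(\pi\ep^2)^n$ for $t\in f(U(x_0,\ep))\sm\Dl$, and compactness of $S^{2r-1}$ (as in Lemma \ref{Ft12}) yields the Gram bound $G\ge N\cdot\mathrm{Id}$. This is the estimate your argument needs and is missing. Note also that the paper itself states Theorem \ref{reduced+} without proof, pointing to \cite{MT3} for the additional uniform estimate (independent of the rank-one quotient) needed to pass from a single quotient $L$ to the full $\CO(1)$; any complete write-up must supply that uniformity as well.
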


One clear difference between \S 1.1 and \S 1.2 is
geometric conditions on $f : X \lra Y$.
Another is about line bundles to be considered, namely $\CO(1)$ or $L$.
For example, 
Theorem \ref{curve+} (or \ref{MT}) concerns all rank 1 quotient of $\Rq$,
while Theorem \ref{curve} concerns a rank 1 quotient of $\Rq$,
hence Theorem \ref{curve+} is naturally stronger than Theorem \ref{curve}.
In fact Theorem \ref{curve+} implies Theorem \ref{curve} by a standard
argument (\cite[\S 6.2]{MT3}).
The proof of Theorem \ref{curve+} (as well as Theorem \ref{MT})
requires another uniform estimate which does not depend on
rank 1 quotients $L$ of $\Rq$, other than the uniform estimate
given in Lemma \ref{Ft11} of the proof of Theorem \ref{curve}.


\section{Preliminary Arguments}

\subsection{Localization}

As the next lemma shows, to see our theorems,
we can neglect codimension 2 analytic subsets of $Y$.

\begin{lem} \label{codim2}
Let $Y$ be a complex manifold, and $Z$ a closed analytic subset
of $Y$ with $\codim_Y Z \ge 2$.
Let $L$ be a holomorphic line bundle on $Y$ with a singular Hermitian
metric $h$ on $L|_{Y \sm Z}$ with semi-positive curvature.
Then $h$ extends as a singular Hermitian metric on $L$ with
semi-positive curvature.
\end{lem}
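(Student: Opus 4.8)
The statement is local on $Y$ and is, in essence, a statement about plurisubharmonic functions. The plan is to reduce it to the classical removable--singularity theorem for plurisubharmonic functions across analytic sets of codimension $\ge 2$, and to recall the mechanism behind that theorem. First I would localize: cover $Y$ by coordinate charts $U \subset \BC^N$ ($N = \dim Y$) on which $L$ is trivialized by a frame $e$. With respect to $e$, the metric $h$ on $L|_{Y \sm Z}$ is represented by a weight $\vph$ via $|e|_h^2 = e^{-\vph}$, the curvature current equals $\levi\vph$, and ``semi-positive curvature'' means precisely that $\vph$ is plurisubharmonic on $U \sm Z$; conversely a plurisubharmonic extension of $\vph$ to $U$ gives the desired extension of $h$ across $Z$. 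Since $Z$ has Lebesgue measure zero and plurisubharmonic functions are locally integrable, such an extension is unique as a metric (its weight is determined almost everywhere), so the local extensions automatically glue to a global singular metric on $L$ with semi-positive curvature. Everything thus reduces to extending the weight $\vph$ plurisubharmonically across $Z$.

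The core point, and the main obstacle, is the Hartogs--type assertion that $\vph$ is locally bounded from above near $Z$; here the hypothesis $\codim_U Z \ge 2$ is indispensable. Fixing $a \in Z$, I would choose coordinates $(z', z'') \in \BC^2 \times \BC^{N-2}$ and a small polydisk $P = P' \times P'' \ni a$ so that, using $\dim Z \le N-2$, the projection $Z \cap \overline P \to P''$ is finite and $Z$ avoids $\partial P' \times \overline{P''}$. For each fixed $z'' \in P''$ the slice function $\vph(\,\cdot\,, z'')$ is plurisubharmonic on $P' \subset \BC^2$ off a finite set of points. The decisive observation is that at an isolated singularity a subharmonic function cannot blow up to $+\infty$: in its Riesz representation the point mass of the (nonnegative) Laplacian contributes a nonpositive multiple of the fundamental solution, so the slice is bounded above near the singular points; hence it extends subharmonically across them and the maximum principle gives $\vph(\,\cdot\,, z'') \le \max_{\partial P' \times \overline{P''}} \vph =: M$. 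As $M < \infty$ is independent of $z''$ (the boundary set is compact and misses $Z$), we obtain $\vph \le M$ on $P \sm Z$. Without codimension $\ge 2$ this step genuinely fails, which is why the weaker codimension hypothesis does not suffice.

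Once $\vph$ is bounded above near $Z$, I would invoke the classical theorem that a plurisubharmonic function bounded above off an analytic (more generally, closed complete pluripolar) set extends across it: the upper regularization
$$\wtil\vph(x) \,=\, \limsup_{U \sm Z \,\ni\, y \to x}\, \vph(y), \qquad x \in U,$$
coincides with $\vph$ on $U \sm Z$ and is plurisubharmonic on all of $U$. The verification of the sub--mean value inequality at points of $Z$ uses the local upper bound together with the fact that $Z$ is negligible, so that ball averages are unaffected by $Z$. Translating back through $|e|_h^2 = e^{-\vph}$ yields the extended singular Hermitian metric on $L$ with semi-positive curvature, and the uniqueness noted above makes the gluing over the charts consistent, completing the proof.
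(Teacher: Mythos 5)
Your overall route --- localize, write $|e|_h^2 = e^{-\vph}$, and reduce the lemma to extending the plurisubharmonic weight $\vph$ across the codimension-$\ge 2$ analytic set $Z$ --- is exactly the paper's, which at that point simply quotes ``Hartogs type extension for plurisubharmonic functions''. You instead try to reprove that classical theorem, and the justification of its decisive step is wrong. It is not true that ``at an isolated singularity a subharmonic function cannot blow up to $+\infty$'': the function $-\log|z|$ on the punctured disk in $\BC$, and $|x|^{2-n}$ on the punctured ball in $\BR^n$ for $n \ge 3$ (so in particular $|x|^{-2}$ on $\BR^4$, the ambient real space of your $\BC^2$-slices), are harmonic off the puncture, hence subharmonic there, and tend to $+\infty$ at the puncture. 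The Riesz-representation reasoning is circular: on the \emph{punctured} domain the Riesz measure of $\vph$ carries no mass at the puncture, and it is precisely the harmonic part of the decomposition that may contain the negative of the fundamental solution and blow up; a point mass of the correct sign appears only after one already knows the function extends subharmonically across the point, which is what is to be proved. So the upper bound $\vph \le M$ near $Z$, the heart of the matter, is not established by your argument.

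What rescues the statement is plurisubharmonicity, not mere subharmonicity, and the correct mechanism is one-complex-dimensional. For $u$ plurisubharmonic on the unit bidisk in $\BC^2$ minus the origin, the restriction of $u$ to each line $\{z_1\} \times \BC$ with $z_1 \ne 0$ is subharmonic on a \emph{full} disk, so the maximum principle gives $u(z_1, z_2) \le \max_{|w| = 1/2} u(z_1, w)$ for $|z_2| \le 1/2$; the right-hand side is uniformly bounded for $|z_1|$ small because $\{|z_1| \le \ep\} \times \{|w| = 1/2\}$ is a compact subset of the regular locus, and the case $z_1 = 0$ follows by upper semicontinuity. The same device (disks in coordinate directions whose closures avoid the finite singular set of the slice) yields the bound $\vph \le M$ near $Z$ in your setting. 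With this repair the remainder of your argument --- the upper regularization $\wtil\vph$, the verification of the sub-mean-value inequality at points of $Z$ using that $Z$ is negligible, and the almost-everywhere uniqueness that glues the local extensions --- is correct and completes the proof.
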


\begin{proof}
Let $W$ be a small open subset of $Y$ with a nowhere vanishing
section $e \in H^0(W, L)$.
Then a function $h(e, e)$ on $W \sm Z$ can be written as
$h(e, e) = e^{-\vph}$ with a plurisubharmonic function 
$\vph$ on $W \sm Z$.
By Hartogs type extension for plurisubharmonic functions,
$\vph$ can be extended uniquely as a plurisubhamonic function 
$\wtil \vph$ on $W$.
Then $e^{-\wtil \vph}$ gives the desired extension of $h$ on $W$.
\end{proof}

In particular, we can neglect the set $S_q$ (resp.\ $Z$) 
in Set up \ref{basic} (resp.\ in Theorem \ref{reduced}), 
and only consider codimension 1 part of the discriminant locus $\Dl$.
Once we obtain the Hodge metric $g$ of $\F|_{Y \sm \Dl}$
or the quotient metric $g^\circ_L$ of $L|_{Y \sm \Dl}$,
the extension property of $g^\circ_L$ is a local question.
Hence we can further reduce our situation to the following

\begin{setup} \label{local}
(Generic local set up.) \
Let $Y$ be (a complex manifold which is biholomorphic to) 
a unit ball in $\BC^m$ with coordinates $t = (t_1, \ldots, t_m)$,
$X$ a complex manifold of $\dim X = n+m$ with a K\"ahler form $\w$.
Let $f : X \lra Y$ be a proper surjective holomorphic map 
with connected fibers.
Let $(E, h)$ be a Nakano semi-positive holomorphic vector bundle on $X$,
and let $q$ be an integer with $0 \le q \le n$.
Let $K_Y \cong \CO_Y$ be a trivialization by a nowhere vanishing
section $dt = dt_1 \wed \ldots \wed dt_m \in H^0(Y, K_Y)$.
Let $g$ be the Hodge metric on $\Rq|_{Y \sm \Dl}$ with respect to 
$\w$ and $h$.
Let us assume the following:\

(1) $f$ is flat, and the discriminant locus $\Dl \subset Y$ is 
$\Dl = \{t_m = 0\}$.

(2) $\Rq \cong \CO_Y^{\oplus r}$, i.e.,
globally free and trivialized of rank $r$.

(3) 
Let $f^*\Dl = \sum b_iB_i$ be the prime decomposition.
For every $B_i$, the induced morphism $f : \Reg B_i \lra \Dl$ 
is surjective and smooth.
Here $\Reg B_i$ is the smooth locus of $B_i$.
If $B_i \ne B_j$, the intersection $B_i \cap B_j$ does not contain
any fiber of $f$.

We may replace $Y$ by slightly smaller balls, or 
may assume everything is defined over a slightly larger ball. 
\qed
\end{setup}

\begin{rem}
(1) For this moment, in Set up \ref{local},
we do not assume that $\dim Y = 1$, nor that $f$ has reduced fibers.

(2) Set up \ref{local}\,(3) is automatically satisfied in case $\dim Y = 1$.

(3) Refer \cite[5.2]{MT2} for the replacement of 
a relative K\"ahler form $\w_f$ by a K\"ahler form $\w$.
\qed
\end{rem}

\begin{notation}
(1) For a non-negative integer $d$, we set $c_d = \ai^{d^2}$.

(2)
Let $f : X \lra Y$ be as in Set up \ref{local}.
We set $\Omega_{X/Y}^p = \bigwedge^p \Om_{X/Y}^1$
rather formally, because we will only deal 
$\Omega_{X/Y}^p$ on which $f$ is smooth.
For an open subset $U \subset X$ where $f$ is smooth,
and for a differentiable form $\sg \in A^{p,0}(U, E)$, 
we say $\sg$ is relatively holomorphic
and write $[\sg] \in H^0(U, \Omega_{X/Y}^p \ot E)$,
if $\sg \wed f^*dt \in H^0(U, \Omega_X^{p+m} \ot E)$. 
\qed
\end{notation}

\subsection{Relative hard Lefschetz type theorem}

We discuss in Set up \ref{local}.

One fundamental ingredient, even in the definition of Hodge metrics,
is the following proposition.
In case $q = 0$, this is quite elementary.

\begin{prop} \label{Takegoshi}
\cite[5.2]{Tk}. 
There exist $H^0(Y, \CO_Y)$-module homomorphisms
\begin{equation*} 
\begin{aligned}
	* \circ \CH &: H^0(Y, \F) 
		\lra H^0(X, \Om_X^{n+m-q} \ot E), \\
	L^q &: H^0(X, \Om_X^{n+m-q} \ot E) 
		\lra H^0(Y, \F) 
\end{aligned}
\end{equation*} 
such that
(1) $(c_{n+m-q}/q!) L^q \circ (* \circ \CH) = id$, and
(2)
for every $u \in H^0(Y, \F)$, there exists a relative
holomorphic form
$[\sg_u] \in H^0(X \sm f^{-1}(\Dl), \Om_{X/Y}^{n-q} \ot E)$
such that
$$
	(* \circ \CH (u))|_{X \sm f^{-1}(\Dl)} = \sg_u \wed f^*dt.
$$ 
\end{prop}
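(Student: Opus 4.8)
The plan is to construct both homomorphisms by fiberwise $L^2$-Hodge theory over the smooth locus $Y \sm \Dl$, to verify that the resulting forms are relatively holomorphic, and then to extend the construction holomorphically across $f^{-1}(\Dl)$; throughout, $*$ denotes the Hodge star of $(X, \w)$ and $h$, and $\CH$ the harmonic projection. Since by assumption $\F$ is torsion free and we have already discarded $S_q$, an identity of sections holding on the dense open set $Y \sm \Dl$ will propagate to all of $Y$, so the whole construction may be carried out over $Y \sm \Dl$ and then extended.

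Over $Y \sm \Dl$ the sheaf $\F$ is locally free, with fiber $H^q(X_t, K_{X_t} \ot E|_{X_t})$ at $t$, and by Hodge theory on the compact K\"ahler fiber $X_t$ (for $\w|_{X_t}$ and $h$) this is represented by $\rdb$-harmonic $E$-valued $(n,q)$-forms. First I would define $\CH(u)$, for a section $u$, as the family of these relative harmonic forms wedged with $f^*dt$, an $E$-valued $(n+m,q)$-form on $X \sm f^{-1}(\Dl)$, and then apply $*$. The decisive fiberwise point is that in top holomorphic degree the Bochner--Kodaira--Nakano identity, together with the Nakano semi-positivity of $(E,h)$, forces a $\rdb$-harmonic $(n,q)$-form to be closed also under the $(1,0)$-part of the Chern connection; consequently $*\CH(u)$ is a $\rdb$-closed form of type $(n+m-q,0)$, i.e. a holomorphic section of $\Om_X^{n+m-q}\ot E$ on $X\sm f^{-1}(\Dl)$, which factors as $\sg_u \wed f^*dt$ with $\sg_u$ \emph{relatively holomorphic} in the sense of the Notation above. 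This yields the map $*\circ\CH$ over $Y\sm\Dl$ and statement (2). For the reverse map, given a holomorphic $(n+m-q)$-form $\tau$ I would apply $L^q = (\w\wed\,\cdot\,)^q$ to get an $E$-valued $(n+m,q)$-form, write it as $\eta \wed f^*dt$, and let $L^q(\tau)$ be the section of $\F|_{Y\sm\Dl}$ whose value at $t$ is the fiberwise Dolbeault class $[\eta|_{X_t}]$. That both maps are $H^0(Y,\CO_Y)$-linear is immediate, since $\CH$, $*$ and $L$ all commute with multiplication by functions pulled back from $Y$.

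Next I would establish the normalization (1) over $Y\sm\Dl$. Pointwise on a fiber every $(n,q)$-form is $L^q$ of a (necessarily primitive) $(n-q,0)$-form, and in this twisted setting the hard Lefschetz isomorphism at the level of harmonic forms is precisely what Nakano semi-positivity secures through Bochner--Kodaira--Nakano. On primitive forms $*$ and $L^q$ act by the classical Weil formulas, and composing them reproduces the original harmonic $(n,q)$-form up to the universal constant $q!/c_{n+m-q}$ attached to the degree $n+m-q$; this is a purely pointwise $\mathfrak{sl}_2$-representation computation on the Lefschetz decomposition. Hence $(c_{n+m-q}/q!)\,L^q\circ(*\circ\CH)=\mathrm{id}$ holds on $Y\sm\Dl$, and therefore on $Y$ by torsion-freeness.

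The main obstacle is the global extension across $f^{-1}(\Dl)$: a priori $(*\circ\CH)(u)$ is holomorphic only on $X\sm f^{-1}(\Dl)$, and I must produce an element of $H^0(X,\Om_X^{n+m-q}\ot E)$ (and, dually, check that $L^q$ lands in $H^0(Y,\F)$ and not merely in sections over $Y\sm\Dl$). The plan is an $L^2$ argument: the Hodge-theoretic construction bounds the pointwise norm of $\CH(u)$ by the Hodge metric, so $\sg_u\wed f^*dt$ is locally square integrable near $f^{-1}(\Dl)$, and since $f$ is proper and flat while $f^{-1}(\Dl)$ is a proper analytic subset, a Riemann--Hartogs type extension theorem for $L^2$ holomorphic forms extends it across $f^{-1}(\Dl)$. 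That $L^q(\tau)$ likewise extends to a global section is exactly the torsion-freeness and codimension-one local freeness of $\F$ established by Koll\'ar and Takegoshi. This extension step, where the degenerate fibers enter, is the delicate core of the statement, and is carried out in \cite[5.2]{Tk}.
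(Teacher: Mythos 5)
Your construction runs in the opposite direction from the paper's, and the step where you reverse it is a genuine gap. The paper's proof does not work fiberwise over $Y \sm \Dl$ and then extend: it invokes Takegoshi's \emph{global} harmonic theory on the total space, namely the space $\CH^{n+m,q}(X, E, f^*\psi)$ of harmonic $(n+m,q)$-forms on all of $X$ taken with respect to the pullback $f^*\psi$ of a strictly plurisubharmonic exhaustion of the Stein base. By \cite[5.2.i]{Tk} this space represents $H^q(X, K_X\ot E)\cong H^0(Y,\F)$ as an $H^0(Y,\CO_Y)$-module, and the Hodge star maps it injectively into $H^0(X,\Om_X^{n+m-q}\ot E)$ --- global holomorphic sections over \emph{all} of $X$, singular fibers included. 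So the holomorphicity of $*\circ\CH(u)$ across $f^{-1}(\Dl)$ is built into the construction from the start; statement (2), the factorization $\sg_u\wed f^*dt$ over the smooth locus, is then read off from \cite[5.2.ii]{Tk}, and the normalization (1) is the splitting $\delta^q$ of the Lefschetz map from \cite[5.2.i]{Tk}. There is no extension step to perform.

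In your version the extension across $f^{-1}(\Dl)$ is exactly the point that does not go through as claimed. You assert that the fiberwise harmonic construction gives a local $L^2$ bound for $\sg_u\wed f^*dt$ near $f^{-1}(\Dl)$ "by the Hodge metric," but (a) a pointwise bound on the harmonic representative in terms of its fiberwise $L^2$ norm requires an elliptic estimate whose constant degenerates with the fiber, and (b) the local boundedness of $g(u,u)(t)$ as $t\to\Dl$ is not established --- the boundary behavior of the Hodge metric at the discriminant is precisely what is at stake in this paper (and even the lower bound of Proposition \ref{weak} requires real work). Deferring to \cite[5.2]{Tk} at that point does not close the gap, because Takegoshi's theorem is not an extension theorem applied to a fiberwise construction; it \emph{is} the global construction. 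A secondary, smaller issue: holomorphic dependence of the fiberwise harmonic representatives on $t$ (needed so that your $*\CH(u)$ is holomorphic in the base directions, not merely along each fiber) is asserted but not proved, and for twisted coefficients this again comes out of the global harmonic theory rather than the fiberwise Bochner--Kodaira identity alone. Your fiberwise picture is the right way to \emph{interpret} the proposition (it is how the Hodge metric is defined in Remark \ref{defHodge}), but as a proof it places the entire difficulty in the step you treat as routine.
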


\begin{proof}
We take a smooth strictly plurisubhamonic exhaustion function $\psi$
on $Y$, for example $\|t\|^2$.
Recalling $\F = K_{Y}^{\ot (-1)} \ot R^qf_*(K_X \ot E)$,
the trivialization $K_{Y} \cong \CO_Y$ by $dt$ gives an isomorphism
$\F \cong R^qf_*(K_X \ot E)$.
Since $Y$ is Stein, we have also a natural isomorphism
$H^0(Y, R^qf_*(K_X \ot E)) \cong H^q(X, K_X \ot E)$.
We denote by $\ga^q$ the composed isomorphism
$$
 \ga^q : H^0(Y, \F) 
	\isom H^q(X, K_X \ot E).
$$

With respect to the K\"ahler form $\w$ on $X$, 
we denote by $*$ the Hodge $*$-operator, and by 
$$
	L^q : H^0(X, \Om_X^{n+m-q} \ot E) 
			\lra H^q(X, K_X \ot E)
$$ 
the Lefschetz homomorphism induced from ${\w}^q \wed \bullet$. 
Also with respect to $\w$ and $h$, we set
$\CH^{n+m,q}(X, E, f^*\psi) 
= \{u \in A^{n+m, q}(X, E) ;\ \rdb u = \vth_h u = 0, \
	e(\rdb (f^*\psi))^* u = 0 \}$.
(We do not explain what this space of harmonic forms is,
because the definition is not important in this text.) \ 
By \cite[5.2.i]{Tk}, $\CH^{n+m,q}(X, E, f^*\psi)$ represents 
$H^q(X, K_X \ot E)$ as an $H^0(Y, \CO_Y)$-module, 
and hence there exists a natural isomorphism
$$
\iota : \CH^{n+m,q}(X, E, f^*\psi) 
		\isom H^q(X, K_X \ot E)
$$
given by taking the Dolbeault cohomology class.
We have an isomorphism
$$
	\CH  = \iota^{-1} \circ \ga^q :  
     H^0(Y, \F) \isom \CH^{n+m,q}(X, E, f^*\psi).
$$
Also by \cite[5.2.i]{Tk},
the Hodge $*$-operator gives an injective homomorphism 
$$
   * : \CH^{n+m,q}(X, E, f^*\psi)
	\lra H^0(X, \Om_X^{n+m-q} \ot E),
$$
and induces a splitting 
$* \circ \iota^{-1} : H^q(X, K_X \ot E)
	\lra H^0(X, \Om_X^{n+m-q} \ot E)$
for the Lefschetz homomorphism $L^q$ such that
$(c_{n+m-q}/q!) L^q \circ * \circ \iota^{-1} = id$.
(The homomorphism $\delta^q$ in \cite[5.2.i]{Tk} with respect to 
$\w$ and $h$ is $* \circ \iota^{-1}$ times a universal constant.) \
In particular 
$$
	(c_{n+m-q}/q!) ((\ga^q)^{-1} \circ L^q) \circ (* \circ \CH ) 
	= id.
$$
All homomorphisms $\ga^q, *, L^q, \iota, \CH$ are as 
$H^0(Y, \CO_Y)$-modules.

Let $u \in H^0(Y, \F)$. 
Then we have $* \circ \CH (u) \in H^0(X, \Om_X^{n+m-q} \ot E)$,
and then by \cite[5.2.ii]{Tk}
$$
	(* \circ \CH (u))|_{X \sm f^{-1}(\Dl)} 
	= \sg_u \wed f^*dt
$$ 
for some $[\sg_u] \in H^0(X \sm f^{-1}(\Dl), \Om_{X/Y}^{n-q} \ot E)$.
It is not difficult to see 
$[\sg_u] \in H^0(X \sm f^{-1}(\Dl), \Om_{X/Y}^{n-q} \ot E)$
does not depend on the particular choice of a global frame 
$dt$ of $K_{Y}$.
\end{proof}

\begin{rem} \label{defHodge}
We recall the definition of the Hodge metric $g$ of $\F|_{Y \sm \Dl}$ 
with respect to $\w$ and $h$ \cite[5.1]{MT2}.
We only mention it for a global section $u \in H^0(Y, \F)$. 
It is given by
$$
 g(u,u)(t) 
	= \int_{X_{t}} (c_{n-q}/q!) 
	  ({\w}^q \wed \sg_u \wed h \ol{\sg_u})|_{X_{t}}
$$
at $t \in Y \sm \Dl$.
In the relation 
$$
	(* \circ \CH (u))|_{X \sm f^{-1}(\Dl)} = \sg_u \wed f^*dt,
$$
the left hand side is holomorphically extendable across $f^{-1}(\Dl)$,
and is non-vanishing if $u$ is, in an appropriate sense.
In the right hand side, $f^*dt$ may only have zero along $f^{-1}(\Dl)$,
that is ``Jacobian'' of $f$, and hence $\sg_u$ may only have ``pole''
along $f^{-1}(\Dl)$.
This is the main reason why $g(u,u)(t)$ has a positive
lower bound on $Y \sm \Dl$, and which is fundamental
for the extension of positivity (see (5) of the proof of 
Proposition \ref{weak} below).
The importance of the role of the Jacobian of $f$ is already
observed by Fujita \cite{Ft}.
\qed
\end{rem}

\subsection{Non-uniform estimate}

Here we state a weak extension property.
This is a basic reason for all extension of positivity
of direct image sheaves of relative canonical bundles,
for example in \cite{Ft}, \cite{Ka1}, \cite{Vi1}, and so on.
However this is not enough to conclude the results in \S 1.

\begin{prop} \label{weak}
In Set up \ref{basic}, let $W \subset Y$ be an open subset, and let 
$u \in H^0(W \sm S_q, \Rq)$ which is nowhere vanishing on $W \sm S_q$.
Then the smooth plurisubharmonic function $-\log g(u,u)$ on $W \sm \Dl$
can be extended as a plurisubharmonic function on $W$.
\end{prop}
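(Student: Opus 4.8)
The plan is to reduce the statement to a semicontinuity/extension property for the plurisubharmonic function $\vph := -\log g(u,u)$ across the codimension-one part of $\Dl$, using the boundedness of the Hodge metric from above that is built into the structure of the pairing in Remark \ref{defHodge}. First I would invoke Lemma \ref{codim2} in spirit: since $S_q$ has codimension $\ge 2$, it suffices to extend $\vph$ as a plurisubharmonic function across $\Dl \sm S_q$, and by the local nature of plurisubharmonicity the question reduces to the generic local Set up \ref{local}, where $\Dl = \{t_m = 0\}$ is smooth. So I may assume $\Rq \cong \CO_Y^{\oplus r}$ and that $u$ is a global holomorphic section, nowhere vanishing.

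The key analytic input is the formula
\begin{equation*}
	g(u,u)(t) = \int_{X_t} (c_{n-q}/q!)\, (\w^q \wed \sg_u \wed h\ol{\sg_u})|_{X_t},
\end{equation*}
together with the fact from Proposition \ref{Takegoshi} that $*\circ\CH(u) = \sg_u \wed f^*dt$ is a \emph{global holomorphic} $E$-valued $(n+m-q)$-form on all of $X$, with no poles. The strategy is to show that $g(u,u)(t)$ stays \emph{bounded above} as $t \to \Dl$, and that $\vph = -\log g(u,u)$ is plurisubharmonic on $Y \sm \Dl$. Plurisubharmonicity on $Y \sm \Dl$ is immediate from the Nakano (hence Griffiths) semi-positivity of $g$ recalled in Set up \ref{basicMT}\,(3): a section of a Griffiths-semi-positive bundle has $\log$-subharmonic square-norm, so $-\log g(u,u)$ is plurisubharmonic there. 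For the boundedness from above, I would estimate the fiber integral by comparing $\sg_u$ with the globally bounded holomorphic form $*\circ\CH(u)$: away from $f^{-1}(\Dl)$ we have $\sg_u = (*\circ\CH(u))/f^*dt$, and since $f^*dt$ is the Jacobian of $f$ and thus \emph{vanishes} along $f^{-1}(\Dl)$, dividing by it can only make $\sg_u$ large — so a priori the integrand blows up, not the reverse. The correct reading (as the remark signals) is that this mechanism forces $g(u,u)$ to have a positive lower bound near $\Dl$, equivalently $\vph$ is bounded \emph{above}.

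With $\vph$ plurisubharmonic on $W \sm \Dl$ and locally bounded above near the smooth hypersurface $\Dl \sm S_q$, the extension is then a standard removable-singularity result: a plurisubharmonic function on the complement of a closed analytic set that is locally bounded above near that set extends uniquely to a plurisubharmonic function across it (the extension being $\tilde\vph(t) = \limsup_{t' \to t,\, t' \notin \Dl} \vph(t')$, which is the same Hartogs-type extension used in the proof of Lemma \ref{codim2}). Applying this across $\Dl \sm S_q$ and then Lemma \ref{codim2} across $S_q$ produces the desired plurisubharmonic extension on all of $W$.

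The main obstacle is the boundedness-above of $\vph$, i.e.\ the positive lower bound for $g(u,u)(t)$ as $t \to \Dl$. The subtlety is precisely that $\sg_u$ may acquire poles along $f^{-1}(\Dl)$ while $*\circ\CH(u)$ stays holomorphic and nonvanishing; one must show that this pole behaviour of $\sg_u$ does not let the fiber integral $g(u,u)(t)$ tend to $0$. Making this rigorous requires a local model of $f$ near $f^{-1}(\Dl)$ and a careful analysis of how the fiberwise Hodge norm of $\sg_u$ degenerates relative to the vanishing order of the Jacobian $f^*dt$ — this is the step where the geometry of the singular fibers, and ultimately the role of the Jacobian emphasized by Fujita \cite{Ft}, enters in an essential way.
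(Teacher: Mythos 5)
Your overall architecture (reduce to the generic local Set up \ref{local}, get plurisubharmonicity of $-\log g(u,u)$ on $Y \sm \Dl$ from Griffiths semi-positivity, show it is locally bounded above near $\Dl$, conclude by Riemann/Hartogs type extension) matches the paper's. But there is a genuine gap: the entire content of the proposition is the positive lower bound for $g(u,u)$ near $\Dl$, and you explicitly defer it (``the main obstacle \dots requires a careful analysis''), so nothing is actually proved. The paper's argument is concrete: choose a component $B$ of $f^*\Dl = \sum b_iB_i$ along which $*\circ\CH(u)$ does not vanish to order $\ge b$; in adapted coordinates $t = f(z) = (z_{n+1},\dots,z_{n+m-1},z_{n+m}^b)$ expand $\sg_u = \sum_{I}\sg_I\,dz_I + R$, note that each $z_{n+m}^{b-1}\sg_I$ extends holomorphically across $B$, and extract one coefficient $\sg_{J_0,i_0}$ with $\divisor(\sg_{J_0,i_0}) = -pB|_U + D$, $0\le p\le b-1$, $D$ effective and not containing $B|_U$; on a small polydisc $U(x_1,\ep)$ centered at a point of $B$ avoiding $D$ one gets $|\sg_{J_0,i_0}|\ge A>0$, and the fibre integral defining $g(u,u)(t)$ is then bounded below by $a^{q+1}A^2(\pi\ep^2)^n$. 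That explicit extraction of a single nonvanishing (possibly polar) coefficient is the step your proposal gestures at but does not supply.

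More importantly, your plan --- to prove the bound at \emph{every} point of the codimension-one part of $\Dl$ --- is not achievable and is not what the paper does. The lower bound is obtained only away from the set $Z_u = \{\, y\in\Dl :\ D \text{ contains } B|_U\cap f^{-1}(y)\,\}$, which is Zariski closed of $\codim_Y Z_u\ge 2$ but in general nonempty; near points of $Z_u$, where the pole divisor $-pB|_U$ and the zero divisor $D$ meet along a whole fibre, the behaviour of $g(u,u)$ is, as the paper's remark after the proof stresses, indeterminate. The argument must therefore conclude in two stages: Riemann type extension of $-\log g(u,u)$ across $\Dl\sm Z_u$, then Hartogs type extension across the codimension-two set $Z_u$. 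Your proposal does not anticipate this $u$-dependent bad set, so even with the local analysis in hand you would be trying to prove a pointwise bound on all of $\Dl$ that can fail. (A minor slip: you first announce that the strategy is to show $g(u,u)$ is bounded \emph{above}; the needed direction is a positive lower bound. You do correct this later, but it suggests the mechanism of Remark \ref{defHodge} was not fully internalized.)
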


\begin{proof}
We may assume $W = Y$.
Moreover it is enough to consider in Set up \ref{local} as before.
In particular $S_q = \emptyset$ and $\Dl = \{t_m=0\}$.
We shall discuss the extension property at the origin $t = 0 \in Y$,
and hence we replace $Y$ by a small ball centered at $t=0$.

(1)
By Proposition \ref{Takegoshi},
we have $* \circ \CH (u) \in H^0(X, \Om_X^{n+m-q} \ot E)$.
This $* \circ \CH (u)$ does not vanish identically along 
$\Dl = \{t_m = 0 \} \subset Y$ as an element of
$H^0(Y, \CO_Y)$-module $H^0(X, \Om_X^{n+m-q} \ot E)$.
This is saying that there exists at least one component $B_j$
in $f^*\Dl = \sum b_iB_i$ such that $* \circ \CH (u)$ does not vanish 
of order greater than or equal to $b_j$ along $B_j$.
We take one such $B_j$ and denote by 
$$
	B = B_j \text{ and } b = b_j.
$$

(2) 
We take a general point $x_0 \in B \cap f^{-1}(0)$ so that $x_0$ is a smooth
point on $(f^*\Dl)_{red}$, and take local coordinates
$(U; z = (z_1, \ldots, z_{n+m}))$ centered at $x_0 \in X$.
We may assume $f(U) = Y$ and 
$t = f(z) = (z_{n+1}, \ldots, z_{n+m-1}, z_{n+m}^b)$ on $U$.

Over $U$, the bundle $E$ is also trivialized, i.e.,
$E|_U \cong U \times \BC^{r(E)}$, where $r(E)$ is the rank of $E$.
Using the local trivializations on $U$, we have a constant
$a > 0$ such that
(i) $\w \ge a \w_{eu}$ on $U$, 
where $\w_{eu} = \ai/2\sum_{i=1}^{n+m} dz_i \wed d\ol{z_i}$
is the standard complex euclidean K\"ahler form,
and
(ii) $h \ge a \text{Id}$ on $U$ as Hemitian matrixes.
Here we regard $h|_U(x)$ as a positive definite Hermitian matrix 
at each $x \in U$ in terms of $E|_U \cong U \times \BC^{r(E)}$, 
and here $\text{Id}$ is the $r(E) \times r(E)$ identity matrix.

(3)
By Proposition \ref{Takegoshi}, we can write as
$(* \circ \CH (u))|_{X \sm f^{-1}(\Dl)} = \sg_u \wed f^*dt$ for some
$\sg_u \in A^{n-q,0}(X \sm f^{-1}(\Dl), E)$.
We write
$\sg_u = \sum_{I \in I_{n-q}} \sg_{I} dz_I + R$ on $U \sm B$.
Here $I_{n-q}$ is the set of all multi-indexes 
$1 \le i_1 < \ldots < i_{n-q} \le n$ of length $n-q$ 
(not including $n+1, \ldots, n+m$),
$\sg_{I} = {}^t(\sg_{I,1}, \ldots, \sg_{I,r(E)})$ is a vector valued 
holomorphic function with $\sg_{I,i} \in H^0(U \sm B, \CO_X)$,
and here $R = \sum_{k=1}^m R_{k} \wed dz_{n+k} \in A^{n-q,0}(U \sm B, E)$. 
Now
$$
	\sg_u \wed f^*dt 
	= b z_{n+m}^{b-1} \bigg(\sum_{I \in I_{n-q}} \sg_{I} dz_I \bigg) 
		\wed dz_{n+1} \wed \ldots \wed dz_{n+m}
$$
on $U \sm B$.
Since $\sg_u \wed f^*dt = (* \circ \CH (u))|_{X \sm f^{-1}(\Dl)}$ and 
$* \circ \CH (u) \in H^0(X, \Om_X^{n+m-q} \ot E)$,
all $z_{n+m}^{b-1} \sg_{I}$ can be extended holomorphically on $U$.
By the non-vanishing property of $* \circ \CH (u)$ along $bB$,
we have at least one $\sg_{J_0,i_0} \in H^0(U \sm B, \CO_X)$ 
whose divisor is 
$$
	\divisor (\sg_{J_0,i_0}) = - pB|_U+D
$$ 
with some integer $0 \le p \le b-1$, and 
an effective divisor $D$ on $U$ not containing $B|_U$.
We take such 
$$
	J_0 \in I_{n-q} \text{ and } i_0 \in \{1, \ldots, r(E)\}.
$$
(Now $\divisor (\sg_{J_0,i_0}) = - pB|_U+D$ is fixed.) \ 
We set 
$$
	Z_u = \{ y \in \Dl ; \ D \text{ contains } B|_U \cap f^{-1}(y) \}.
$$
We can see that $Z_u$ is not Zariski dense in $\Dl$, because
otherwise $D$ contains $B|_U$, and also that $Z_u$ is Zariski closed
of $\codim_Y Z_u \ge 2$ (particularly using $f$ is flat).

(4)
We take any point $y_1 \in \Dl \sm Z_u$, and 
a point $x_1 \in B|_U \cap f^{-1}(y_1)$ such that $x_1 \not\in D$.
Let $0 < \ep \ll 1$ be a sufficiently small number so that,
on the $\ep$-polydisc neighbourhood
$U(x_1, \ep) = \{z = (z_1, \ldots, z_{n+m}) \in U ; \ 
|z_i - z_i(x_1)| < \ep \text{ for any } 1 \le i \le n+m \}$,
we have
$$
	A := \inf\{ |\sg_{J_0,i_0}(z)| ; \ z \in U(x_1, \ep) \sm B \} > 0.
$$
We should note that $\sg_{J_0,i_0}$ may have a pole along $B$,
but no zeros on $U(x_1, \ep)$.
We set $Y' := f(U(x_1, \ep))$ which is an open neighbourhood of
$y_1 \in Y$, since $f$ is flat (in particular it is an open mapping).
Then for any $t \in Y' \sm \Dl$, we have 
\begin{equation*} 
\begin{aligned}
\int_{X_{t}}(c_{n-q}/q!) 
(\w^q \wed \sg_u \wed h\ol{\sg_u})|_{X_{t}} 
&
\ge
a \int_{X_{t} \cap U}(c_{n-q}/q!) 
	(\w^q \wed \sg_u \wed \ol{\sg_u})|_{X_{t} \cap U} \\
& = 
a^{q+1} \int_{z \in X_{t} \cap U}
	\sum_{I \in I_{n-q}} \sum_{i=1}^r |\sg_{I,i}(z)|^2 dV_n \\
& \ge 
a^{q+1} \int_{z \in X_{t} \cap U(x_1, \ep)} A^2 \ dV_n \\
& = a^{q+1} A^2 (\pi \ep^2)^n.
\end{aligned}
\end{equation*} 
Here $dV_n = (\ai/2)^n \bigwedge_{i=1}^n dz_i \wed d\ol{z_i}$
is the standard euclidean volume form in $\BC^n$.
Namely we have $g(u, u)(t) \ge a^{q+1} A^2 (\pi \ep^2)^n$
for any $t \in Y' \sm \Dl$.

(5)
We proved that $-\log g(u,u)$ is bounded from above
around every point of $\Dl \sm Z_u$.
This means that a plurisubharmonic function $-\log g(u,u)$ on $Y \sm \Dl$ 
can be extended as a plurisubharmonic function on $Y \sm Z_u$ 
by Riemann type extension, and hence as 
a plurisubharmonic function on $Y$ by Hartogs type extension.
\end{proof}

\begin{rem}
Here are some remarks when we try to generalize the proof above
to obtain Theorem \ref{curve} and \ref{reduced}.
The point is the set $Z_u$ above depends on $u$.
This is the main difficulty when we consider an extension property of 
quotient metrics.
In that case, we need to obtain a uniform estimate of $g(u_s, u_s)$
for a family $\{u_s\}$.
If $s$ moves, then $Z_{u_s}$ also may move and cover a larger subset
of $\Dl$, which may not be negligible for the extension of 
plurisubharmonic functions.

The intersection $B|_U \cap D$ is a set of indeterminacies.
If (a part of) a fiber $f^{-1}(y)$ is contained in $B|_U \cap D$,
the analysis of the behavior of $g(u, u)$ around such $y$
is quite hard and in fact indeterminate.
This is why we do not want to touch $Z_u$. 
In some geometric setting as below, we can avoid such phenomena.
We can delete one of two in the right hand side of
$\divisor (\sg_{I,i}) = -pB|_U + D$.

(i)
In case $\dim Y = 1$, we can take $D = 0$.
This is because, if a prime divisor $\Gamma$ on $U$ contains
$B|_U \cap f^{-1}(y)$, then $\Gamma = B|_U$.
In case when $\dim Y = 1$, $q = 0$ and $E = \CO_X$, 
a uniform estimate is cleared by Fujita \cite[1.11]{Ft}
(as we will see below).
This will lead Theorem \ref{curve}.

(ii) 
In case the fibers of $f$ are reduced, we can take $p = 0$
(cf.\ $0 \le p \le b-1$ in (3) of the proof above).
This will lead Theorem \ref{reduced}.

To deal with a general case in \cite{MT3}, we use a semi-stable 
reduction for $f$. 
A computation of Hodge metrics is a kind of an estimation of integrals,
which usually can be done only after a good choice of local coordinates.
A semi-stable reduction can be seen as a resolution of singularities
of a map $f : X \lra Y$.
Then the crucial point is to compair two Hodge metrics:\
the original one and the one after taking a semi-stable reduction.
\qed
\end{rem}



\section{Proof of Theorems}

\subsection{Quotient metric} \label{quot}

We discuss in Set up \ref{local}.

We denote by $F = \Rq$ which is locally free on $Y$,
and by $r$ the rank of $F$.
We have a smooth Hermitian metric $g$ defined on $Y \sm \Dl$ (not on $Y$).
Let $F \lra L$ be a quotient line bundle with the kernel $M$:\
$0 \lra M \lra F \lra L \lra 0$ (exact).
We first describe the quotient metric on $L|_{Y \sm \Dl}$.
We take a frame $e_1, \ldots, e_r \in H^0(Y, F)$ over $Y$
such that $e_1, \ldots, e_{r-1}$ generate $M$.
Then the image 
$$
	\what e_r \in H^0(Y, L)
$$ 
of $e_r$ under $F \lra L$ generates $L$.
We represent the Hodge metric $g$ on $Y \sm \Dl$ in terms of this frame
as $g_{i\ol j} = g(e_i, e_j) \in A^0(Y \sm \Dl, \BC)$.
At each point $t \in Y \sm \Dl$, $(g_{i\ol j}(t))_{1 \le i, j \le r}$
is a positive definite Hermitian matrix, in particular,
$(g_{i\ol j}(t))_{1 \le i, j \le r-1}$ is also positive definite.
We let $(g^{\ol i j}(t))_{1 \le i, j \le r-1}$ be the inverse matrix.
Then the pointwise orthogonal projection of $e_r$ to 
$(M|_{Y \sm \Dl})^\perp$ with respect to $g$ is given by
$$
  P(e_r) 
  = e_r - \sum_{i=1}^{r-1}\sum_{j=1}^{r-1}e_ig^{\ol i j}g_{j \ol r}
  \ \in A^0(Y \sm \Dl, F).
$$
We have in fact $P(e_r) - e_r \in A^0(Y \sm \Dl, M)$ and 
$g(P(e_r), s) = 0$ for any $s \in A^0(Y \sm \Dl, M)$.
Then the quotient metric on $L|_{Y \sm \Dl}$
is defined by
$$
	g^\circ_L(\what e_r, \what e_r) = g(P(e_r), P(e_r)) 
$$
on $Y \sm \Dl$.

It is well-known after Griffiths that the curvature 
does not decrease by a quotient.
In our setting, the Nakano semi-positivity of $(F|_{Y \sm \Dl}, g)$
\cite[1.1]{MT2}, or even weaker the Griffiths semi-positivity 
implies that $(L|_{Y \sm \Dl}, g^\circ_L)$ is semi-positive.
In particular if we write $g^\circ_L(\what{e_r}, \what{e_r}) = e^{-\vph}$
with $\vph \in A^0(Y \sm \Dl, \BR)$, this $\vph$ is plurisubharmonic 
on $Y \sm \Dl$.
If we can show $\vph$ is extended as a plurisubharmonic function on $Y$,
then $g^\circ_L$ extends as a singular Hermitian metric on $L$ over $Y$
with semi-positive curvature.
By virtue of Riemann type extension for plurisubharmonic functions,
it is enough to show that $\vph$ is bounded from above
(i.e., $g^\circ_L(\what e_r, \what e_r)$ is bounded from below 
by a positive constant) around every point $y \in \Dl$.
In the next two subsections, we shall prove the following

\begin{lem} \label{Ft13+} 
In Set up \ref{local} and the notations above, assume further that
$\dim Y = 1$, or that $f$ has reduced fibers.
Let $y \in \Dl$.
Then there exists a neighbourhood $Y'$ of $y \in Y$ and a positive number
$N$ such that $g^\circ_L(\what e_r, \what e_r)(t) \ge N$ for any 
$t \in Y' \sm \Dl$.
\end{lem}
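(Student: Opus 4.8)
The plan is to establish the uniform lower bound through the variational form of the quotient metric, combined with a uniform refinement of the local integral estimate used in the proof of Proposition \ref{weak}.

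First I would rewrite the quotient metric as a distance to the subsheaf $M$. Since $P(e_r)$ is the $g$-orthogonal projection of $e_r$ onto $(M|_{Y \sm \Dl})^\perp$, for every $t \in Y \sm \Dl$ we have
\[
 g^\circ_L(\what e_r, \what e_r)(t)
 = g(P(e_r), P(e_r))(t)
 = \inf_{\lam \in \BC^{r-1}} g(u_\lam, u_\lam)(t),
 \qquad u_\lam := e_r - \sum_{k=1}^{r-1}\lam_k e_k .
\]
Hence it suffices to bound $g(u_\lam, u_\lam)(t)$ below by a constant $N > 0$ independent of \emph{both} $\lam \in \BC^{r-1}$ and $t$ in a neighbourhood $Y'$ of $y$. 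Each $u_\lam$ lies in $H^0(Y, F)$, and by Proposition \ref{Takegoshi} its relative form depends linearly on $\lam$, namely $\sg_{u_\lam} = \sg_{e_r} - \sum_k \lam_k \sg_{e_k}$.

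Next I would localize exactly as in steps (2)--(4) of the proof of Proposition \ref{weak}: pick a component $B = B_j$ of $f^*\Dl$ and a general point $x_0 \in B \cap f^{-1}(y)$, coordinates $(U; z)$ with $f(z) = (z_{n+1}, \ldots, z_{n+m-1}, z_{n+m}^b)$, and constants with $\w \ge a\,\w_{eu}$ and $h \ge a\,\mathrm{Id}$. Writing $\sg_{e_k} = \sum_I \sg_{k,I}\,dz_I + R_k$ on $U \sm B$, the positivity gives, for every $\lam$,
\[
 g(u_\lam, u_\lam)(t)
 \ge a^{q+1}\int_{X_t \cap U(x_1,\ep)}
 \sum_{I \in I_{n-q}}\sum_{i=1}^{r(E)}
 \Big| \sg_{r,I,i}(z) - \sum_{k=1}^{r-1}\lam_k\,\sg_{k,I,i}(z) \Big|^2 \, dV_n .
\]
Passing to the infimum over $\lam$ preserves this inequality, so I am reduced to a lower bound for a squared $L^2$-distance. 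The functions $\tau_{k,I,i} := z_{n+m}^{b-1}\sg_{k,I,i}$ extend holomorphically over $U$, and on each sheet of a fibre $X_t$ the factor $z_{n+m}^{b-1}$ is constant, of modulus $|t_m|^{(b-1)/b}$; pulling it out replaces the integrand by the $\tau$'s and introduces the harmless prefactor $|t_m|^{-2(b-1)/b} \ge 1$. Thus everything reduces to showing that the $L^2$-distance, on the fixed polydisc, from the holomorphic vector $(\tau_{r,I,i}(\cdot,t))_{I,i}$ to the linear span of the $(\tau_{k,I,i}(\cdot,t))_{I,i}$, $k \le r-1$, stays bounded away from $0$ as $t \to y$.

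The main obstacle is precisely this uniformity as $t \to y$. For a fixed $t \notin \Dl$ the distance is positive, since $e_1, \ldots, e_r$ is a frame of $F$ and $* \circ \CH$ is injective by Proposition \ref{Takegoshi}(1), so the $\tau_k(\cdot,t)$ are linearly independent; the difficulty is that the limiting functions obtained by restricting to the central fibre $\{t_m = 0\}$ must remain linearly independent, i.e.\ the indeterminacy locus must not move with $\lam$. This is exactly the phenomenon isolated in the Remark after Proposition \ref{weak}, where $\divisor(\sg_{J_0,i_0}) = -pB + D$ and the moving set $B \cap D$ obstructs a uniform bound. The two hypotheses each kill one of the two terms. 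When $\dim Y = 1$ one has $D = 0$, so no zero divisor moves and the limit $t \to 0$ is controlled sheet by sheet; this is the uniform estimate of Fujita \cite[1.11]{Ft} in the case $q = 0$, $E = \CO_X$, which carries over here. When $f$ has reduced fibres one has $b = 1$ and $p = 0$, so the $\sg_{k,I,i}$ are already holomorphic across $B$ and restrict directly to $t = y$ without any Jacobian denominator. In either case the limiting vectors are genuine holomorphic functions on the polydisc whose linear independence is inherited from that of the frame, so by continuity in $t$ the $L^2$-distance admits a uniform positive lower bound. Inserting this into the displayed inequality yields $g^\circ_L(\what e_r, \what e_r)(t) \ge N$ on $Y' \sm \Dl$, as required.
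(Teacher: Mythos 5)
Your variational rewriting $g^\circ_L(\what e_r, \what e_r)(t) = \inf_{\lam} g(u_\lam, u_\lam)(t)$ is correct and is equivalent to the step the paper performs last (there, $P(e_r) = \sum s_i(t) e_i$ with $s_r = 1$, and homogeneity reduces the problem to a bound uniform over the compact sphere $S^{2r-1}$ of coefficient directions). The localization, the integral estimate, and the observation that the two hypotheses respectively kill $D$ and $p$ in $\divisor(\sg_{J_0,i_0}) = -pB|_U + D$ all match the paper. But there is a genuine gap in the last step: you fix a \emph{single} component $B$, a \emph{single} point $x_0$ and chart $U(x_0,\ep)$, and then assert that the restrictions of the coefficient functions $\tau_{k,I,i}$ to $X_y \cap U$ remain linearly independent because $e_1,\dots,e_r$ is a frame. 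That implication is false in general. A nontrivial combination $\sum_k c_k e_k$ is a nowhere vanishing section of $\F$, but the corresponding form $* \circ \CH(\sum c_k e_k)$ may perfectly well vanish to order $\ge b$ along the particular component $B$ you chose, or vanish at the particular point $x_0$ (its non-vanishing is only guaranteed along \emph{some} component of $f^*\Dl$, resp.\ at \emph{some} point of the central fibre, depending on the combination). In that case the $L^2$-distance computed only over $U(x_0,\ep)$ degenerates to $0$ as $t \to y$, and your chain of inequalities yields a lower bound of $0$ — true but useless. Discarding the integral over the rest of the fibre is exactly where the information is lost.

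The repair is the compactness argument the paper runs: for each direction $s_0 \in S^{2r-1}$ choose the component $B = B(s_0)$, the point $x_0(s_0)$, the chart $U(s_0)$ and the index $(J_0,i_0)(s_0)$ adapted to $s_0$, obtain by continuity of $s \mapsto * \circ \CH(u_s)$ a lower bound $g(u_s,u_s)(t) \ge N(s_0)$ valid for $s$ in a neighbourhood $S(s_0)$ of $s_0$ in the sphere and $t \in Y'(s_0) \sm \Dl$, and then cover $S^{2r-1}$ by finitely many such neighbourhoods (Lemmas \ref{Ft11} and \ref{Ft12}). Your two hypotheses enter exactly where you say they do — they guarantee that for the adapted choice of $B(s_0)$ and $x_0(s_0)$ the relevant coefficient has no moving zero divisor meeting the fibre over $y$ — but they do not remove the need to let the localization vary with the direction.
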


\begin{cor}
Theorem \ref{curve} and Theorem \ref{reduced} hold true.
\end{cor}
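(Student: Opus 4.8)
The plan is to deduce both theorems from Lemma \ref{Ft13+}, treating that lemma as the single analytic input and assembling the rest from the localization of \S 2.1 and the construction of \S \ref{quot}. The discussion in \S \ref{quot} has already carried out the decisive reduction: working in Set up \ref{local}, it produces the quotient metric $g^\circ_L$ on $L|_{Y \sm \Dl}$, shows via the Griffiths (indeed Nakano) semi-positivity of the Hodge metric $g$ that $g^\circ_L$ has semi-positive curvature there, and records that $g^\circ_L$ extends to a semi-positively curved singular metric on $L$ over $Y$ provided $g^\circ_L(\what e_r, \what e_r)$ is bounded below by a positive constant near every point of $\Dl$. Lemma \ref{Ft13+} supplies exactly this bound, under the two alternative hypotheses $\dim Y = 1$ and $f$ having reduced fibers, which are precisely those of Theorem \ref{curve} and Theorem \ref{reduced} respectively. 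So the first task is simply to make this implication explicit and then to transport the resulting local conclusion back to the global statements.

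In detail, I would argue as follows inside Set up \ref{local}. Write $g^\circ_L(\what e_r, \what e_r) = e^{-\vph}$ with $\vph$ plurisubharmonic on $Y \sm \Dl$; extending $g^\circ_L$ with semi-positive curvature across the hypersurface $\Dl = \{t_m = 0\}$ is equivalent to extending $\vph$ as a plurisubharmonic function on $Y$. By the Riemann type extension theorem for plurisubharmonic functions across a hypersurface, it suffices that $\vph$ be locally bounded from above near each point of $\Dl$, i.e.\ that $g^\circ_L(\what e_r, \what e_r)$ be locally bounded below by a positive constant there, which is the assertion of Lemma \ref{Ft13+}. Hence $\vph$ extends plurisubharmonically, $g^\circ_L$ extends as a singular Hermitian metric with semi-positive curvature, and by construction its restriction to $Y \sm \Dl$ is the quotient metric of $g$. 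Note that the choice of local frame $\what e_r$ of $L$ affects $\vph$ only by a pluriharmonic term, so neither plurisubharmonicity nor the upper bound depends on it.

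To globalize, I would run the localization of \S 2.1 in reverse. Each theorem is local on $Y$, so by Lemma \ref{codim2} it is enough to construct the extension over the complement of a closed analytic subset of $Y$ of codimension $\ge 2$: in Theorem \ref{curve} this set is $S_q$, which is empty since $\Rq$ is already locally free on the smooth curve $Y$, and in Theorem \ref{reduced} it is $S_q \cup Z$, over whose complement both the surjection $\Rq \lra L$ and the local freeness of $\Rq$ are available. After shrinking $Y$ to a small ball about a point of $\Dl$, trivializing $\Rq$, and using that Set up \ref{local}\,(3) is automatic when $\dim Y = 1$ (and is arranged from reducedness in the other case), one lands in Set up \ref{local}, where the previous two paragraphs apply. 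A final application of Lemma \ref{codim2} extends the metric across the neglected codimension $\ge 2$ locus to all of $Y$, still semi-positive and still restricting to the quotient Hodge metric on $Y \sm \Dl$. This gives Theorem \ref{curve} and Theorem \ref{reduced}.

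Since all the analytic work is concentrated in Lemma \ref{Ft13+}, the deduction is essentially formal, and the only point demanding care is the reduction to Set up \ref{local}: one must verify that, under each theorem's hypotheses, conditions (1)--(3) can genuinely be arranged after shrinking and deleting a codimension $\ge 2$ set, and that the quotient data of \S \ref{quot} (a frame $e_1,\dots,e_r$ with $e_1,\dots,e_{r-1}$ generating the kernel) is available there. I expect this bookkeeping, rather than any new estimate, to be the main obstacle.
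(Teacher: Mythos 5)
Your proposal is correct and takes essentially the same route as the paper: the deduction is exactly the one prepared in \S\ref{quot} (quotient metric, semi-positivity from \cite[1.1]{MT2}, Riemann-type extension of $\vph$ once Lemma \ref{Ft13+} gives the local positive lower bound near $\Dl$), combined with the localization of \S 2.1 and Lemma \ref{codim2} to dispose of $S_q$ and $Z$ and to reduce to Set up \ref{local}. Your closing observations (emptiness of $S_q$ when $\dim Y=1$, frame changes altering $\vph$ only by a pluriharmonic term) match the paper's implicit bookkeeping, so nothing is missing.
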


We introduce the following notations for the following arguments.
For $s = (s_1, \ldots, s_r) \in \BC^r$, we let
$u_s = \sum_{i=1}^r s_ie_i \in H^0(Y, F)$.
We note that $u_s$ is nowhere vanishing on $Y$ as soon as $s \ne 0$.
We also note that, with respect to the standard topology of $\BC^r$ and 
the topology of uniform convergence on compact sets for 
$H^0(X, \Om_X^{n+m-q} \ot E)$,
the map $\BC^r \lra H^0(X, \Om_X^{n+m-q} \ot E)$ given by 
$s \mapsto u_s \mapsto * \circ \CH (u_s) 
= \sum_{i=1}^r s_i (* \circ \CH (e_i))$, 
is continuous.
Let $S^{2r-1} = \{ s \in \BC^r ; \ |s| = (\sum |s_i|^2)^{1/2} = 1 \}$.

\subsection{Over curves}

We shall prove Theorem \ref{curve} by showing Lemma \ref{Ft13+} in this case.
It is enough to consider in Set up \ref{local} with $\dim Y =1$.
In particular $Y = \{ t \in \BC ; \ |t| < 1 \}$ a unit disc, and
$\Dl = 0 \in Y$ the origin.
We will use both $\Dl \subset Y$ and $t=0 \in Y$ to compair
our argument here with a general case.
Let $F = \F \lra L$ be a quotient line bundle, and use the same notation
in \S \ref{quot}, in particular we have a frame 
$e_1, \ldots, e_r \in H^0(Y, F)$, $\what e_r \in H^0(Y, L)$
generates $L$ and so on.
We use $u_s = \sum_{i=1}^r s_ie_i \in H^0(Y, F)$ for 
$s = (s_1, \ldots, s_r) \in \BC^r$.
The key is to obtain the following uniform bound.

\begin{lem}   \label{Ft11}
(cf.\ \cite[1.11]{Ft}.) \ 
In Set up \ref{local} with $\dim Y =1$ and the notation above,
let $s_0 \in S^{2r-1}$.
Then there exist a neighbourhood $S(s_0)$ of $s_0$ in $S^{2r-1}$,
a neighbourhood $Y'$ of $0 \in Y$ and a positive number $N$ such that
$g(u_s, u_s)(t) \ge N$ for any $s \in S(s_0)$ and any $t \in Y' \sm \Dl$.
\end{lem}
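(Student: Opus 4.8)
The plan is to prove the uniform bound by repeating the local computation of Proposition \ref{weak}, but organised around the \emph{holomorphic} object $*\circ\CH(u_s)$ rather than the relative form $\sg_{u_s}$, so that the pole along the singular fibre is carried by an explicit, $s$-independent power of the Jacobian of $f$, while all the $s$-dependence sits in a pole-free numerator that varies continuously with $s$. I work in Set up \ref{local} with $\dim Y=1$: a disc $Y$, $\Dl=\{0\}$, a component $B=B_j$ of $f^*\Dl=\sum b_iB_i$ along which $*\circ\CH(u_{s_0})$ does not vanish to order $\ge b:=b_j$, a generic smooth point $x_0\in B\cap f^{-1}(0)$, and coordinates $z=(z_1,\ldots,z_{n+1})$ on $U$ with $w:=z_{n+1}$ and $f(z)=w^b$. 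By Proposition \ref{Takegoshi} the map $s\mapsto *\circ\CH(u_s)=\sum_i s_i\,(*\circ\CH(e_i))$ is linear, hence continuous for uniform convergence on $\ol U$; and since $*\circ\CH(u_s)=\sg_{u_s}\wed f^*dt$ with $f^*dt=b\,w^{b-1}\,dw$, every term of the global holomorphic form $*\circ\CH(u_s)$ contains $dw$, and its coefficients are (up to the constant $b$) the functions $G_{s;I,i}:=w^{b-1}\sg_{u_s;I,i}$, which are holomorphic on $U$ and linear in $s$. The only singularity, the factor $w^{-(b-1)}$, is thus fixed by $f$ and independent of $s$.

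Next I reduce the Hodge norm to these holomorphic coefficients. As in Proposition \ref{weak}, using $\w\ge a\,\w_{eu}$ and $h\ge a\,\text{Id}$ on $U$ and restricting to a polydisc $U(x_1,\ep)$ about a generic $x_1\in B$, one gets, since $|w|=|t|^{1/b}=:\rho$ is constant along $X_t$,
\[
 g(u_s,u_s)(t)\ \ge\ a^{q+1}\,\rho^{-2(b-1)}
 \sum_{I,i}\int_{z'\in D_\ep}\big|G_{s;I,i}(z',\rho e^{\ai\theta})\big|^2\,dV_n(z'),
\]
where $z'=(z_1,\ldots,z_n)$, $D_\ep$ is the $\ep$-polydisc, and $\rho e^{\ai\theta}$ is the (constant) value of $w$ on the chosen sheet of $X_t\cap U(x_1,\ep)$. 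The curve hypothesis enters precisely here: because $\dim Y=1$ we may take $x_1$ off the divisor $D=D(s_0)$ appearing in the proof of Proposition \ref{weak} (equivalently $Z_{u_{s_0}}=\emptyset$, the curve case $D=0$ discussed after that proposition), so the distinguished component factors as $G_{s_0;J_0,i_0}=w^{k_0}\cdot v(z)$ near $x_1$, where $k_0:=\mbox{ord}_B(*\circ\CH(u_{s_0}))\le b-1$ and $v$ is a nowhere vanishing (unit) holomorphic function.

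Finally I extract the leading behaviour. The coefficient of $w^{k_0}$ in $G_{s;J_0,i_0}$ is a holomorphic function $c_s(z')$ that is linear in $s$, with $c_{s_0}(x_1)=v(x_1)\ne0$; hence there are a neighbourhood $S(s_0)$ of $s_0$, a shrunken $\ep$, and a constant $c_0>0$ with $|c_s(z')|\ge c_0$ for all $z'\in D_\ep$ and $s\in S(s_0)$. Since the modulus of the leading term $c_s(z')\rho^{k_0}e^{\ai k_0\theta}$ is independent of $\theta$, while the holomorphic remainder is $O(\rho^{k_0+1})$ relative to it, an elementary expansion (integrating in $z'$, so that the $\theta$-dependence survives only in the bounded, relatively $O(\rho)$ correction) gives $\int_{D_\ep}|G_{s;J_0,i_0}(z',\rho e^{\ai\theta})|^2\,dV_n\ge\frac12 c_0^2\,\rho^{2k_0}\,(\pi\ep^2)^n$ for all $\theta$ and all small $\rho$. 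Substituting back,
\[
 g(u_s,u_s)(t)\ \ge\ \tfrac{1}{2}\,a^{q+1}c_0^2\,(\pi\ep^2)^n\,
 |t|^{-2(b-1-k_0)/b}\ \ge\ N\ >\ 0,
\]
because $b-1-k_0\ge0$ and $|t|<1$; shrinking $Y'$ about $0$ and $S(s_0)$ then finishes the argument.

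The main obstacle is the uniformity in $s$ in this last step. For $s=s_0$ the leading term is clean ($G_{s_0;J_0,i_0}=w^{k_0}\cdot\text{unit}$), but for nearby $s$ the perturbation $\sum_l(s-s_0)_l\,G_{e_l;J_0,i_0}$ may introduce terms of order $<k_0$ in $w$ with coefficients $O(|s-s_0|)$. These do not destroy the bound — a lower order in $w$ only improves the power of $|t|^{-1}$, and after integration in $z'$ the dominant modulus remains $\theta$-independent — but making this rigorous requires a careful joint control of $(s,\rho,\theta)$, expanding around the \emph{actual} lowest-order-in-$w$ coefficient (which is never identically zero in $z'$) rather than around $w^{k_0}$, and checking that the resulting constant can be taken uniform over a fixed $S(s_0)$. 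The conceptual reason the argument goes through, and where $\dim Y=1$ is indispensable, is that the decisive power of $|t|$ originates from the Jacobian of $f$ and is therefore the same for all $s$, whereas the $s$-dependence is confined to the pole-free section $*\circ\CH(u_s)$, whose restriction to $B$ varies continuously with $s$; the curve hypothesis guarantees that the relevant leading coefficient is a unit near a generic point of $B$, so that it cannot be forced to vanish by a small perturbation of $s$.
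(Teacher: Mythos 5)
Your proposal follows the same skeleton as the paper's proof: Proposition \ref{Takegoshi} gives the global holomorphic form $*\circ\CH(u_s)$ depending linearly, hence continuously, on $s$; one picks a component $B$ of $f^*\Dl$ along which $*\circ\CH(u_{s_0})$ vanishes to order $k_0\le b-1$, a generic point of $B$ with $t=z_{n+1}^b$, the constants $a$ from $\w\ge a\,\w_{eu}$ and $h\ge a\,\mathrm{Id}$, and uses $\dim Y=1$ to ensure the distinguished coefficient has divisor $-p_0B|_U$ with no horizontal part $D$. The genuine gap is in your final step, exactly where you flag a difficulty and then dismiss it. You bound $g(u_s,u_s)(t)$ from below by the integral over a \emph{single sheet} $\{w=\rho e^{\ai\theta}\}$ of $X_t\cap U(x_1,\ep)$ and claim that lower-order-in-$w$ terms created by the perturbation ``only improve the power of $|t|^{-1}$''. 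For a lower bound this is false: if $G_{s;J_0,i_0}=c_{s,k}(z')w^{k}+c_{s,k_0}(z')w^{k_0}+\cdots$ with $k<k_0$ and $c_{s,k}=O(|s-s_0|)$, then at the radius $\rho^{k_0-k}\sim \|c_{s,k}\|/\|c_{s,k_0}\|$ (which tends to $0$ with $|s-s_0|$, so such $t$ occur in every neighbourhood $Y'$ of $0$) and at a suitable argument $\theta$ the two terms interfere destructively; integration in $z'$ does not exclude this, since by Cauchy--Schwarz $\int_{D_\ep}|G_{s;J_0,i_0}|^2\,dV_n\ge\bigl(\rho^{k}\|c_{s,k}\|-\rho^{k_0}\|c_{s,k_0}\|\bigr)^2$, which can vanish. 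So no uniform $N$ exists on a single sheet, and ``careful joint control of $(s,\rho,\theta)$'' cannot repair this.

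What repairs it is using the whole of $X_t\cap U(x_0,\ep)$, i.e.\ all $b$ sheets $w=\rho\zeta^{j}e^{\ai\theta}$, $\zeta=e^{2\pi\ai/b}$, $j=0,\dots,b-1$: the orthogonality $\sum_{j=0}^{b-1}\zeta^{j(k-k')}=0$ for $k\not\equiv k'\ (\mathrm{mod}\ b)$ kills every cross term between distinct exponents $k,k'\le b-1$, so that $\sum_j|G_s|^2\ge b\,|c_{s,k_0}|^2\rho^{2k_0}$ up to contributions of exponents $\ge b$ that are absorbed for small $\rho$; this is precisely Fujita's device in \cite[1.11]{Ft} and is the ingredient missing from your write-up. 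The paper itself takes a slightly different shortcut: it asserts, via the continuity of $s\mapsto *\circ\CH(u_s)$ and the fact that $\divisor(\sg_{sJ_0,i_0})=-p(s)B|_U$ acquires no effective part for $s$ near $s_0$, a uniform pointwise bound $A=\inf|\sg_{sJ_0,i_0}|>0$ on $U(x_0,\ep)\sm B$ over $s\in S(s_0)$, after which the estimate of Proposition \ref{weak} applies verbatim. That pointwise non-vanishing statement is doing all the work that your expansion fails to do (its full justification is the content of \cite[1.11]{Ft} and of \cite{MT3}), and your argument, which keeps only one sheet and one power of $w$, does not recover it.
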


\begin{proof} 
We denote by $f^*\Dl = \sum b_iB_i$.

(1) 
By Proposition \ref{Takegoshi}, we have
$* \circ \CH (u_{s_0}) \in H^0(X, \Om_X^{n+1-q} \ot E)$.
This $* \circ \CH (u_{s_0})$ does not vanish at $t = 0$ as an element 
of $H^0(Y, \CO_Y)$-module $H^0(X, \Om_X^{n+1-q} \ot E)$.
Then there exists a component 
$B_j$ in $f^*\Dl = \sum b_iB_i$ such that 
$(* \circ \CH)(u_{s_0})$ does not vanish 
of order greater than or equal to $b_j$ along $B_j$.
We take one such $B_j$ and denote by $B = B_j \text{ and } b = b_j$.

(2) 
We take a general point $x_0 \in B$ so that 
$x_0$ is a smooth point on $(f^*\Dl)_{red} = f^{-1}(0)$, 
and take local coordinates $(U; z = (z_1, \ldots, z_{n+1}))$ 
centered at $x_0 \in X$ such that $t = f(z) = z_{n+1}^b$ on $U$.
Over $U$, the bundle $E$ is also trivialized.
Using the local trivializations on $U$, we have a constant
$a > 0$ such that
(i) $\w \ge a \w_{eu}$ on $U$, 
where $\w_{eu} = \ai/2\sum_{i=1}^{n+1} dz_i \wed d\ol{z_i}$,
and
(ii) $h \ge a \text{Id}$ on $U$ as Hemitian matrixes,
as in the proof of Proposition \ref{weak}.

(3)
Let $s \in S^{2r-1}$.
By Proposition \ref{Takegoshi}, we can write as
$(* \circ \CH (u_s))|_{X \sm f^{-1}(\Dl)} = \sg_s \wed f^*dt$ for some
$\sg_s \in A^{n-q,0}(X \sm f^{-1}(\Dl), E)$.
We write
$\sg_s = \sum_{I \in I_{n-q}} \sg_{sI} dz_I + R_s \wed dz_{n+1}$ 
on $U \sm B$.
Here $I_{n-q}$ is the set of all multi-indexes 
$1 \le i_1 < \ldots < i_{n-q} \le n$ of length $n-q$, 
$\sg_{sI} = {}^t(\sg_{sI,1}, \ldots, \sg_{sI,r(E)})$ with
$\sg_{sI,i} \in H^0(U \sm B, \CO_X)$, 
and here $R_s \wed dz_{n+1} \in A^{n-q,0}(U \sm B, E)$. 
Now
$$
	\sg_s \wed f^*dt 
	= b z_{n+1}^{b-1} \bigg(\sum_{I \in I_{n-q}} \sg_{sI} dz_I \bigg) 
		 \wed dz_{n+1}
$$ 
on $U \sm B$.
Since $\sg_s \wed f^*dt = (* \circ \CH(u_s))|_{X \sm f^{-1}(\Dl)}$ and 
$* \circ \CH(u_s) \in H^0(X, \Om_X^{n+1-q} \ot E)$,
all $z_{n+1}^{b-1} \sg_{sI}$ can be extended holomorphically on $U$.

At the point $s_0 \in S^{2r-1}$, 
by the non-vanishing property of $* \circ \CH(u_{s_0})$ along $bB$,
we have at least one $\sg_{s_0J_0,i_0} \in H^0(U \sm B, \CO_X)$ 
whose divisor is $\divisor (\sg_{s_0J_0,i_0}) = - p_0B|_U$ 
with some integer $0 \le p_0 \le b-1$ (being $x_0 \in B|_U$ 
general, and $U$ sufficiently small).
Here we used $\dim Y = 1$.
We take such $J_0 \in I_{n-q}$ and $i_0 \in \{1, \ldots, r(E)\}$.
By the continuity of $s \mapsto u_s \mapsto * \circ \CH(u_s)$, 
we can take the same $J_0$ and $i_0$ for any $s \in S^{2r-1}$ near $s_0$, 
so that $\divisor (\sg_{sJ_0,i_0}) = - p(s)B|_U$ with 
the order $p(s)$ satisfies $0 \le p(s) \le p_0 = p(s_0)$
for any $s \in S^{2r-1}$ near $s_0$.

(4)
By the continuity of $s \mapsto u_s \mapsto * \circ \CH(u_s)$, 
we can take an $\ep$-polydisc neighbourhood
$U(x_0, \ep) = \{z = (z_1, \ldots, z_{n+1}) \in U ; \ 
|z_i - z_i(x_0)| < \ep \text{ for any } 1 \le i \le n+1 \}$ for some
$\ep > 0$, and a neighbourhood $S(s_0)$ of $s_0$ in $S^{2r-1}$ such that
$A := \inf\{ |\sg_{sJ_0,i_0}(z)| ; \
	 s \in S(s_0), \ z \in U(x_0, \ep) \sm B \} > 0$.
We should note that $\sg_{s J_0, i_0}$ may have a pole along $B$,
but no zeros on $U(x_0, \ep)$.
We set $Y' := f(U(x_0, \ep))$ which is an open neighbourhood of
$0 \in Y$, since $f$ is flat.
Then for any $s \in S(s_0)$ and any $t \in Y' \sm \Dl$, we have
$g(u_s, u_s)(t) \ge a^{q+1} A^2 (\pi \ep^2)^n$ as in Proposition \ref{weak}.
\end{proof}

\begin{lem}  \label{Ft12}
(cf.\ \cite[1.12]{Ft}.) \
There exist a neighbourhood $Y'$ of $0 \in Y$ and a positive number
$N$ such that $g(u_s, u_s)(t) \ge N$ for any $s \in S^{2r-1}$
and any $t \in Y' \sm \Dl$.
\end{lem}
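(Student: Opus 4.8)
The plan is to upgrade the local uniform bound of Lemma \ref{Ft11} to a bound that is uniform over the entire sphere $S^{2r-1}$, by a straightforward compactness argument. The essential analytic content has already been extracted in Lemma \ref{Ft11}; what remains is patching.

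First I would apply Lemma \ref{Ft11} at every point $s_0 \in S^{2r-1}$. For each such $s_0$ this produces a neighbourhood $S(s_0)$ of $s_0$ in $S^{2r-1}$, a neighbourhood $Y'(s_0)$ of $0 \in Y$, and a positive number $N(s_0)$ such that $g(u_s, u_s)(t) \ge N(s_0)$ for all $s \in S(s_0)$ and all $t \in Y'(s_0) \sm \Dl$. The family $\{S(s_0)\}_{s_0 \in S^{2r-1}}$ is then an open cover of $S^{2r-1}$. Since $S^{2r-1}$ is compact, I would extract a finite subcover $S(s_1), \ldots, S(s_k)$, and then set
$$
	Y' := \bigcap_{j=1}^k Y'(s_j), \qquad
	N := \min\{ N(s_1), \ldots, N(s_k) \}.
$$
Being a finite intersection of open neighbourhoods of $0$, the set $Y'$ is again an open neighbourhood of $0 \in Y$; being the minimum of finitely many positive numbers, $N$ is positive. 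To conclude, given any $s \in S^{2r-1}$, it lies in some $S(s_j)$, and for any $t \in Y' \sm \Dl \subset Y'(s_j) \sm \Dl$ I would read off $g(u_s, u_s)(t) \ge N(s_j) \ge N$, which is exactly the claimed uniform estimate.

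I do not expect a genuine obstacle here: all the difficulty is concentrated in Lemma \ref{Ft11}, where the hypothesis $\dim Y = 1$ is used to ensure the divisor of $\sg_{s_0 J_0, i_0}$ has no extra effective part $D$, so that the chosen indices $J_0, i_0$ and the resulting lower bound persist under small perturbations of $s$ near $s_0$. The only point worth emphasizing in the present step is that finiteness of the subcover is precisely what allows me to produce simultaneously a single common neighbourhood $Y'$ of $0$ and a single uniform positive constant $N$; this is the one place where compactness of $S^{2r-1}$ is indispensable.
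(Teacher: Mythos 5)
Your proposal is correct and is exactly the argument the paper intends: the paper's proof of this lemma is the one-line remark ``Since $S^{2r-1}$ is compact, this is clear from Lemma \ref{Ft11},'' and your finite-subcover patching (intersecting the finitely many neighbourhoods $Y'(s_j)$ and taking the minimum of the $N(s_j)$) is just the explicit version of that. Nothing further is needed.
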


\begin{proof}
Since $S^{2r-1}$ is compact, this is clear from Lemma \ref{Ft11}.
\end{proof}

\begin{lem} \label{Ft13}
(cf.\ \cite[1.13]{Ft}.) \  
There exists a neighbourhood $Y'$ of $0 \in Y$ and a positive number
$N$ such that $g^\circ_L(\what e_r, \what e_r)(t) \ge N$ for any $t \in Y' \sm \Dl$.
\end{lem}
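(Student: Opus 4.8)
The plan is to deduce this directly from the uniform bound of Lemma \ref{Ft12} by recognizing the quotient norm of $\what e_r$ as a minimum of $g(u_s, u_s)$ over a suitable family. First I would recall that, by the very definition of the quotient metric through the orthogonal projection $P(e_r)$ onto $(M|_{Y \sm \Dl})^\perp$, the number $g^\circ_L(\what e_r, \what e_r)(t) = g(P(e_r), P(e_r))(t)$ is the smallest $g$-norm among all preimages of $\what e_r$ under $F \lra L$. Since the kernel is $M = \langle e_1, \ldots, e_{r-1}\rangle$, these preimages are exactly the sections $e_r + \sum_{i=1}^{r-1} s_i e_i = u_s$ with $s = (s_1, \ldots, s_{r-1}, 1)$. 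Hence, for every $t \in Y \sm \Dl$,
\[
	g^\circ_L(\what e_r, \what e_r)(t) = \inf_{s \in \BC^r,\ s_r = 1} g(u_s, u_s)(t).
\]

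The second step is to exploit homogeneity. Because $u_s = \sum_{i=1}^r s_i e_i$ is $\BC$-linear in $s$ and $g$ is Hermitian, one has $g(u_s, u_s)(t) = |s|^2\, g(u_{s/|s|}, u_{s/|s|})(t)$ with $s/|s| \in S^{2r-1}$. Applying Lemma \ref{Ft12}, I fix the neighbourhood $Y'$ of $0 \in Y$ and the constant $N > 0$ it provides, so that $g(u_{\hat s}, u_{\hat s})(t) \ge N$ for every $\hat s \in S^{2r-1}$ and every $t \in Y' \sm \Dl$. For any $s$ with $s_r = 1$ we have $|s|^2 \ge |s_r|^2 = 1$, and therefore $g(u_s, u_s)(t) = |s|^2 g(u_{s/|s|}, u_{s/|s|})(t) \ge |s|^2 N \ge N$ on $Y' \sm \Dl$. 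Taking the infimum over all such $s$ yields $g^\circ_L(\what e_r, \what e_r)(t) \ge N$ for $t \in Y' \sm \Dl$, which is the assertion.

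The argument is essentially formal once Lemma \ref{Ft12} is in hand; no genuine analytic obstacle remains. The only point requiring care is the reduction in the first paragraph: one must identify the quotient norm with the minimal $g$-norm over the whole fiber $e_r + M$ (equivalently with $g(P(e_r), P(e_r))$) and observe that this fiber is precisely the affine family $\{u_s : s_r = 1\}$. It is then the normalization constraint $|s| \ge 1$ on this slice that lets the spherical estimate of Lemma \ref{Ft12}, valid on the compact $S^{2r-1}$, pass to the unbounded affine family via homogeneity of $g(u_s, u_s)$ in $s$.
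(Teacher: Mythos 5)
Your proposal is correct and follows essentially the same route as the paper: the paper writes $P(e_r) = u_{s(t)}$ with $s_i(t) = -\sum_j g^{\ol i j}g_{j\ol r}$ and $s_r = 1$, then applies the homogeneity $g(u_s,u_s) = |s|^2 g(u_{s/|s|},u_{s/|s|})$ together with Lemma \ref{Ft12} and the bound $|s|\ge 1$ forced by $s_r=1$. Your reformulation via the infimum over the affine fiber $\{u_s : s_r = 1\}$ is just a slightly more invariant packaging of the same argument.
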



\begin{proof}
We take a neighbourhood $Y'$ of $0 \in Y$ and a positive number $N$
in Lemma \ref{Ft12}.
We may assume $Y'$ is relatively compact in $Y$.
We put $s_i = - \sum_{j=1}^{r-1} g^{\ol i j}g_{j \ol r}
\in A^0(Y \sm \Dl, \BC)$ for $1 \le i \le r-1$, and $s_r = 1$.
Then $P(e_r) = \sum_{i=1}^r s_i e_i$ on $Y \sm \Dl$.
For every $t \in Y' \sm \Dl$, we have
$s = (s_1, s_2, \ldots, s_r) \in \BC^r \sm \{0\}$, and 
$s(t)/|s(t)| \in S^{2r-1}$.
Then for any $t \in Y' \sm \Dl$, we have
$g^\circ_L(\what e_r, \what e_r)(t) = g(u_{s(t)}, u_{s(t)})(t)
= |s(t)|^2 g(u_{s(t)/|s(t)|}, u_{s(t)/|s(t)|})(t) \ge N$,
since $s/|s| \in S^{2r-1}$.
\end{proof}


\subsection{Fiber reduced}

We shall prove Theorem \ref{reduced} by the same strategy 
in the previous subsection.
By Lemma \ref{codim2} we may assume the set $Z$ 
in Theorem \ref{reduced} is empty.
It is enough to consider in Set up \ref{local} with $f^*\Dl = \sum B_i$.
Let $F = \F \lra L$ be a quotient line bundle, and use the same notation
in \S \ref{quot}, in particular we have a frame 
$e_1, \ldots, e_r \in H^0(Y, F)$, $\what e_r \in H^0(Y, L)$
generates $L$ and so on.
We use $u_s = \sum_{i=1}^r s_ie_i \in H^0(Y, F)$ for 
$s = (s_1, \ldots, s_r) \in \BC^r$.
As we observed in the previous subsection, it is enough to show
the following

\begin{lem} 
(cf.\ \cite[1.11]{Ft}.) \  
In Set up \ref{local} and the notation above, let $s_0 \in S^{2r-1}$.
Then there exist a neighbourhood $S(s_0)$ of $s_0$ in $S^{2r-1}$,
a neighbourhood $Y'$ of $0 \in Y$ and a positive number $N$ such that
$g(u_s, u_s)(t) \ge N$ for any $s \in S(s_0)$ and any $t \in Y' \sm \Dl$.
\end{lem}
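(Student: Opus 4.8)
The plan is to run the proof of Lemma~\ref{Ft11} essentially verbatim, feeding in the reduced-fibre hypothesis where the curve proof used $\dim Y = 1$. Since we have reduced to $f^*\Dl = \sum B_i$ with every multiplicity equal to $1$, each component $B = B_j$ carries $b = b_j = 1$. First I would apply Proposition~\ref{Takegoshi} to obtain $* \circ \CH(u_{s_0}) \in H^0(X, \Om_X^{n+m-q} \ot E)$, which does not vanish identically along $\Dl$; because $b = 1$, this says exactly that there is a component $B$ along which $* \circ \CH(u_{s_0})$ is not identically zero.

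Next I would take a general point $x_0$ of the central fibre lying on $B$. As the fibre is reduced and $f$ is flat with $X$ and $Y$ smooth, at such a general smooth point $f$ is a submersion, so I may choose coordinates $(U; z)$ centred at $x_0$ with $t = f(z) = (z_{n+1}, \ldots, z_{n+m})$, whence $f^* dt = dz_{n+1} \wed \ldots \wed dz_{n+m}$ has no zero. Writing $\sg_s = \sum_{I \in I_{n-q}} \sg_{sI}\, dz_I + R$ as in step~(3) of Lemma~\ref{Ft11}, the identity $\sg_s \wed f^* dt = (* \circ \CH(u_s))|_{X \sm f^{-1}(\Dl)}$ now shows each $\sg_{sI}$ is genuinely holomorphic on $U$ (the factor $z_{n+m}^{\,b-1} = 1$); this is precisely the simplification $p = 0$ recorded in the Remark. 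Hence $* \circ \CH(u_{s_0})(x_0) \ne 0$ forces $\sg_{s_0 J_0, i_0}(x_0) \ne 0$ for some $J_0 \in I_{n-q}$ and some $i_0$. Continuity of $s \mapsto * \circ \CH(u_s)$ then lets me retain the same $J_0, i_0$ on a neighbourhood $S(s_0)$ and an $\ep$-polydisc $U(x_0, \ep)$ with $A := \inf\{|\sg_{sJ_0,i_0}(z)| : s \in S(s_0),\ z \in U(x_0, \ep)\} > 0$; setting $Y' = f(U(x_0, \ep))$ and running the integral estimate of Proposition~\ref{weak} yields $g(u_s, u_s)(t) \ge a^{q+1} A^2 (\pi \ep^2)^n$ (with $a > 0$ as in step~(2) of Lemma~\ref{Ft11}) for all $s \in S(s_0)$ and $t \in Y' \sm \Dl$.

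The step demanding genuine care---the counterpart of forcing $D = 0$ in the curve case---is guaranteeing nonvanishing of the chosen coefficient along the central-fibre slice $B \cap f^{-1}(0)$, not merely along the ambient divisor $B$, and uniformly for $s$ near $s_0$. This is exactly the indeterminacy locus $Z_{u_s}$ of Proposition~\ref{weak}, which a priori moves with $s$. Reducedness is what tames it: since $p = 0$, the coefficient $\sg_{s_0 J_0, i_0}$ is holomorphic across $B$, so its restriction to the reduced fibre $B \cap f^{-1}(0)$ is well defined and is not identically zero once the base point is taken outside the set $Z_{u_{s_0}}$, which is of codimension $\ge 2$; continuity then carries the pointwise nonvanishing at $x_0$ to the whole neighbourhood $S(s_0)$. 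Because the eventual extension of the plurisubharmonic potential is insensitive to subsets of codimension $2$ by Lemma~\ref{codim2}, it suffices to establish the bound for $0$ general in $\Dl$; I would therefore discard $Z_{u_{s_0}}$ first, select $x_0$ in the remaining slice, and only then invoke continuity together with the compactness of $S^{2r-1}$.
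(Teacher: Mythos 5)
Your outline of steps (2)--(4) matches the paper's proof, but there is a genuine gap at the very step you flag as delicate, and your proposed fix is the one the paper explicitly rules out. From Proposition~\ref{Takegoshi} you only extract that $* \circ \CH(u_{s_0})$ is not identically zero along the divisor $B$, and to upgrade this to non-vanishing at a point of the slice $B \cap f^{-1}(0)$ you require the base point $0$ to lie outside $Z_{u_{s_0}}$. That exceptional set depends on $s_0$. To run the compactness argument on $S^{2r-1}$ you must carry out the construction for a covering family of points $s_0$, each contributing its own $Z_{u_{s_0}}$, and to extend the quotient metric across $\Dl$ you need the bound at all but a \emph{fixed} codimension-two analytic subset of points of $\Dl$. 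The union $\bigcup_{s \in S^{2r-1}} Z_{u_s}$ is an uncountable union of codimension-two sets and need not be contained in any proper analytic subset of $\Dl$; it can sweep out a divisor of $Y$. This is precisely the obstruction described in the Remark following Proposition~\ref{weak} (``if $s$ moves, then $Z_{u_s}$ also may move and cover a larger subset of $\Dl$''), so Lemma~\ref{codim2} cannot absorb it. Note also that your reduction ``it suffices to treat $0$ general in $\Dl$'' is circular as stated: the finite subcover of $S^{2r-1}$, and hence the exceptional set to be discarded, is itself chosen in terms of the base point.

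The missing idea is that reducedness is used not only to force $p=0$ but, more importantly, to kill $Z_{u_{s_0}}$ altogether in step (1). Since the fibre $f^{-1}(0)$ is reduced, if $* \circ \CH(u_{s_0})$ vanished identically along \emph{every} $B_i \cap f^{-1}(0)$, it would vanish on the whole reduced fibre, hence lie in $f^*\fm_0 \cdot H^0(X, \Om_X^{n+m-q} \ot E)$; under the module isomorphisms of Proposition~\ref{Takegoshi} this says $u_{s_0}$ vanishes at $0$ as a section of the locally free sheaf $\F$, contradicting that $u_{s_0} = \sum s_{0,i} e_i$ with $(e_i)$ a frame and $s_0 \ne 0$. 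This produces, for \emph{every} $0 \in \Dl$ and every $s_0$, a component $B_j$ and a point $x_0 \in B_j \cap f^{-1}(0)$ (which may be taken in the smooth locus of $f^*\Dl$ by Set up~\ref{local}\,(3)) with $* \circ \CH(u_{s_0})(x_0) \ne 0$, with no genericity assumption on $0$ and no reference to $Z_{u_{s_0}}$. From that point on your steps (2)--(4) go through verbatim. Replace your genericity argument by this one and the proof is complete.
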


\begin{proof} 
(1) 
By Proposition \ref{Takegoshi}, we have
$* \circ \CH(u_{s_0}) \in H^0(X, \Om_X^{n+m-q} \ot E)$. 
This $* \circ \CH(u_{s_0})$ does not vanish at $t = 0$ as an element 
of $H^0(Y, \CO_Y)$-module $H^0(X, \Om_X^{n+m-q} \ot E)$.
There exists a component $B_j$ in $f^*\Dl = \sum B_i$
such that $* \circ \CH(u_{s_0})$ does not vanish identically along 
$B_j \cap f^{-1}(0)$.
Here we used our assumption in Theorem \ref{reduced} that 
$f$ has reduced fibers.
In fact, if $* \circ \CH(u_{s_0})$ does vanish identically along 
all $B_i \cap f^{-1}(0)$ in $f^*\Dl = \sum B_i$, then 
$* \circ \CH(u_{s_0})$ vanishes at $t = 0$ as an element 
of $H^0(Y, \CO_Y)$-module $H^0(X, \Om_X^{n+m-q} \ot E)$,
and leads a contradiction.
We take one such $B_j$ and denote by $B = B_j$ (with $b = b_j = 1$).

(2) 
We take a point $x_0 \in B \cap f^{-1}(0)$ such that
$* \circ \CH(u_{s_0})$ does not vanish at $x_0$,
and that $f^*\Dl$ is smooth at $x_0$.
We then take local coordinates
$(U; z = (z_1, \ldots, z_{n+m}))$ centered at $x_0 \in X$
such that $t = f(z) = (z_{n+1}, \ldots, z_{n+m-1}, z_{n+m})$ on $U$.
Over $U$, the bundle $E$ is also trivialized.
Using the local trivializations on $U$, we have a constant
$a > 0$ such that
(i) $\w \ge a \w_{eu}$ on $U$, 
where $\w_{eu} = \ai/2\sum_{i=1}^{n+m} dz_i \wed d\ol{z_i}$,
and
(ii) $h \ge a \text{Id}$ on $U$ as Hemitian matrixes,
as in the proof of Proposition \ref{weak}.

(3)
Let $s \in S^{2r-1}$.
By Proposition \ref{Takegoshi}, we can write as
$(* \circ \CH(u_s))|_{X \sm f^{-1}(\Dl)} = \sg_s \wed f^*dt$ for some
$\sg_s \in A^{n-q,0}(X \sm f^{-1}(\Dl), E)$.
We write
$\sg_s = \sum_{I \in I_{n-q}} \sg_{sI} dz_I + R_s$ on $U \sm B$.
Here $I_{n-q}$ is the set of all multi-indexes 
$1 \le i_1 < \ldots < i_{n-q} \le n$ of length $n-q$, 
$\sg_{sI} = {}^t(\sg_{sI,1}, \ldots, \sg_{sI,r(E)})$ with
$\sg_{sI,i} \in H^0(U \sm B, \CO_X)$, 
and here $R_s = \sum_{k=1}^m R_{sk} \wed dz_{n+k} \in A^{n-q,0}(U \sm B, E)$. 
Now
$$
	\sg_s \wed f^*dt 
	= \bigg(\sum_{I \in I_{n-q}} \sg_{sI} dz_I \bigg) 
		 \wed dz_{n+1} \wed \ldots \wed dz_{n+m}
$$ 
on $U \sm B$.
Since $\sg_s \wed f^*dt = (* \circ \CH(u_s))|_{X \sm f^{-1}(\Dl)}$ and 
$* \circ \CH(u_s) \in H^0(X, \Om_X^{n+m-q} \ot E)$,
all $\sg_{sI}$ can be extended holomorphically on $U$.

At the point $s_0 \in S^{2r-1}$, 
by the non-vanishing property of $* \circ \CH(u_{s_0})$ at $x_0$,
we have at least one $\sg_{s_0J_0,i_0} \in H^0(U \sm B, \CO_X)$ 
whose divisor is $\divisor (\sg_{s_0J_0,i_0}) = D_0$ with some
effective divisor $D_0$ on $U$ not containing $x_0$.
This is because, if all $\sg_{s_0I,i}$ vanish at $x_0$,
we see $* \circ \CH(u_{s_0}) = \sg_{s_0} \wed f^*dt$ (now on $U$) 
vanishes at $x_0$, and we have a contradiction.
We take such $J_0 \in I_{n-q}$ and $i_0 \in \{1, \ldots, r(E)\}$.
By the continuity of $s \mapsto u_s \mapsto * \circ \CH(u_s)$, 
we can take the same $J_0$ and $i_0$ for any $s \in S^{2r-1}$ near $s_0$.
By the same token, the divisor $D(s)$ may depend on $s \in S^{2r-1}$, but
we can keep the condition that $D(s)$ does not contain 
$B|_U \cap f^{-1}(0)$ if $s \in S^{2r-1}$ is close to $s_0$.

(4)
Then by the continuity of $s \mapsto u_s \mapsto * \circ \CH(u_s)$, 
we can take an $\ep$-polydisc neighbourhood
$U(x_0, \ep) = \{z = (z_1, \ldots, z_{n+m}) \in U ; \ 
|z_i - z_i(x_0)| < \ep \text{ for any } 1 \le i \le n+m \}$ for some
$\ep > 0$, and a neighbourhood $S(s_0)$ of $s_0$ in $S^{2r-1}$ such that
$A := \inf\{ |\sg_{sJ_0,i_0}(z)| ; \
	 s \in S(s_0), \ z \in U(x_0, \ep) \sm B \} > 0$.
We set $Y' := f(U(x_0, \ep))$ which is an open neighbourhood of
$0 \in Y$, since $f$ is flat.
Then for any $s \in S(s_0)$ and any $t \in Y' \sm \Dl$, we have
$g(u_s, u_s)(t) \ge a^{q+1} A^2 (\pi \ep^2)^n$ as in 
Proposition \ref{weak}.
\end{proof}



\section{Examples}			

\newcommand{\pb}{\mathbb{P}}\newcommand{\cb}{\mathbb{C}}
\newcommand{\zb}{\mathbb{Z}}\newcommand{\rb}{\mathbb{R}}
\newcommand{\oc}{\mathcal{O}}

Here are some related examples and counter-examples of the positivity of 
direct image sheaves, including cases the total space 
$X$ can be singular.
These are due to Wi\'sniewski and H\"oring, and taken from \cite{Ho}.

Our general set up is as follows.
We take a vector bundle $E$ of rank $n+2$ over a smooth projective
variety $Y$.
Denote by $p : \pb(E)\lra Y$ the natural (smooth) projection.
We take a hypersurface $X$ in $\pb(E)$ cut out by a section of
$N:=\oc_E(d)\ot p^* \lambda$ for $d>0$ and some line bundle 
$\lambda$ on $Y$.
Denote by $f : X \lra Y$ the induced (non necessary smooth) map 
of relative dimension $n$.
Because $X$ is a divisor, the sheaf 
$\w_{P(E)/Y} \ot N \ot \oc_X$ equals $\w_{X/Y}$.
We choose a line bundle $L:=\oc_E(k)\ot p^* \mu$ with $k>0$ and with
a line bundle $\mu$ on $Y$, and set $L_X := L|_X$ the resrtiction on $X$.
We then consider the exact sequence
$$
	0 \lra \w_{\pb(E)/Y} \ot L \lra \w_{\pb(E)/Y}\ot N\ot L
	  \lra \w_{X/Y}\ot L_X \lra 0.
$$
Note that since $L$ is $p$-ample, we have $R^1 p_* (\w_{\pb(E)/Y}\ot L)=0$.
We push the sequence forward by $p$ to get the following exact sequence 
of sheaves on $Y$:
$$
	0\lra p_*(\w_{\pb(E)/Y}\ot L)
	\lra p_*(\w_{\pb(E)/Y}\ot N\ot L)
	\lra f_* (\w_{X/Y}\ot L_X) \lra 0.
$$
Remember that $\w_{\pb (E)/Y}=\oc_E(-n-2) \ot p^*\det E$, so that 
$p_* (\w_{\pb(E)/Y}\ot \oc_E(k))=0$ for $k< n+2$, and that
$p_* (\w_{\pb(E)/Y}\ot \oc_E(k))=S^{k-n-2}E\ot\det E$ 
for $k\geq n+2$.

\begin{exmp} (\cite[2.C]{Ho}.) \
Choose $Y=\pb^1$, $E = \oc_{\pb^1}(-1)^{\oplus 2}\oplus \oc_{\pb^1}$, 
$N = \oc_E(2)$ that is effective and defines $X$,
and $L = \oc_E(1)\ot p^* \oc_{\pb^1}(1)$ that is semi-positive.
The push-forward sequence reads
$$
	0\lra 0\lra \oc_{\pb^1}(-1)\lra f_* (\w_{X/Y}\ot L_X) 
	\lra 0.
$$
Hence $f_* (\w_{X/Y}\ot L_X)$ is negative. 
The point is that here, $X$ is not reduced.
\qed
\end{exmp}

\begin{exmp} (\cite[2.D]{Ho}.) \ 
Choose $Y=\pb^1$ and $E = \oc_{\pb^1}(-1)\oplus \oc_{\pb^1}^{\oplus 3}$. 
Take $N = \oc_E(4)$ whose generic section defines $X$. 
This scheme is a $3$-fold smooth outside the $1$-dimensional base locus 
$\pb (\oc_{\pb^1}(-1))\subset\pb (E)$, Gorenstein as a divisor, 
and normal since smooth in codimension 1. 
Choose $L=\oc_E(k)\ot p^* \oc_{\pb^1}(k)$ that is semi-positive.
The push-forward sequence shows that for $1\leq k< 4$, 
$S^k E\ot \oc_{\pb^1}(k-1) = f_* (\w_{X/Y}\ot L_X)$ 
is not nef.
For $k \geq 4$, the push-forward sequence reads
$$
	0\lra S^{k-4}E\ot \oc_{\pb^1}(k-1) 
	\overset{\sg}{\lra} 
	S^k E\ot \oc_{\pb^1}(k-1)\lra f_* (\w_{X/Y}\ot L_X)
	\lra 0.
$$
Here the map $\sigma$ is given by the contraction with the section 
$s\in H^0(\pb(E), N)$ $=$ $H^0(Y,S^4 E)$ $=$ 
$H^0(Y,S^4\oc_{\pb^1}^{\oplus 3})$, 
whereas the quotient 
$S^k E/\text{Im} (S^4\oc_{\pb^1}^{\oplus 3}\ot S^{k-4}E)$ 
contains the factor $\oc_{\pb^1}(-1)^{\oplus k}$. 
Hence $f_* (\w_{X/Y}\ot L_X)$ is not weakly positive.
The point here is that the locus of non-rational singularities of 
$X$ projects onto $Y$ by $f : X \lra Y$.
\qed
\end{exmp}

\begin{exmp} (\cite[2.A]{Ho}.) \ 
Choose $Y$ to be $\pi : Y = \pb (F) \lra \BP^3$, where 
$F:=\oc_{\pb^3}(2)^{\oplus 2}\oplus \oc_{\pb^3}$ is semi-ample but not ample.
Choose $E$ to be 
$\oc_F(1)^{\oplus 2} \oplus (\oc_F(1) \ot \pi^* \oc_{\pb^3}(1))$.
Wi\'sniewski showed that the linear system 
$|N|:=|\oc_E(2) \ot p^*\pi^*\oc_{\pb^3}(-2)|$
has a smooth member, that we denote by $X$.
Remark that $L:=\oc_E(1)$ is semi-positive, but 
the push-forward sequence shows that
$$
	\oc_F(3) \ot \pi^*\oc_{\pb^3}(-1) = f_* (\w_{X/Y}\ot L_X)
$$
is not nef.
The point here is that the conic bundle $f : X \lra Y$ 
has some non-reduced fibers,
that make the direct image only weakly positive.
\qed
\end{exmp}




\baselineskip=14pt

\vskip 10pt

Christophe Mourougane

\vskip 5pt

Institut de Recherche Math\'ematique de Rennes

Campus de Beaulieu

35042 Rennes cedex, France

e-mail: christophe.mourougane@univ-rennes1.fr

\vskip 10pt

Shigeharu Takayama

\vskip 5pt

Graduate School of Mathematical Sciences

University of Tokyo

3-8-1 Komaba, Tokyo

153-8914, Japan

e-mail: taka@ms.u-tokyo.ac.jp


\begin{thebibliography}{Mor}		
\baselineskip=14pt


\bibitem[B]{B}
Berndtsson B.,
{\it Curvature of vector bundles associated to holomorphic fibrations},
to appear in Ann.\ of Math., math.CV/0511225v2.

\bibitem[BP1]{BP1}
Berndtsson B.\ - P\u{a}un M.,
{\it Bergman kernels and the pseudoeffectivity of relative canonical
bundles},
arXiv:math/0703344 [math.AG].

\bibitem[BP2]{BP2}
Berndtsson B.\ - P\u{a}un M.,
{\it A Bergman kernel proof of the Kawamata subadjunction theorem},
arXiv:0804.3884 [math.AG].

\bibitem[C]{C}
Campana F.,
{\it Orbifolds, special varieties and classification theory},
Ann.\ Inst.\ Fourier Grenoble
{\bf 54} 
(2004)
499--630.

\bibitem[DPS]{DPS}
Demailly J.-P., Peternell T.\ - Schneider M.,
{\it Pseudo-effective line bundles on compact K\"ahler manifolds},
Internat.\ J.\ Math.\
{\bf 12}
(2001)
689--741.

\bibitem[EV]{EV}
Esnault H.\ - Viehweg E., 
{\it Lectures on vanishing theorems}, 
{DMV Seminar} 
{\bf 20},
Birkh\"auser,
Basel, 
{(1992)}.

\bibitem[Fn]{Fn} 
Fujino O., 
{\it Higher direct images of log-canonical divisors},
J.\ Differential Geom.\
{\bf 66}
(2004)
453--479.

\bibitem[FM]{FM}
Fujino O.\ - Mori S.,
{\it A canonical bundle formula},
J.\ Differential Geom.\
{\bf 56}
(2000)
167--188.

\bibitem[Ft]{Ft} 
Fujita T., 
{\it On K\"ahler fiber spaces over curves}, 
J.\ Math.\ Soc.\ Japan 
{\bf 30}
(1978)
779--794.

\bibitem[Gr]{Gr} 
Griffiths Ph.\ A.,
{\it Periods of integrals on algebraic manifolds. {III}. 
Some global differential-geometric properties of the period mapping}, 
Publ.\ Math.\ IHES
{\bf 38} 
(1970)
125--180.

\bibitem[H\"o]{Ho}
H\"oring A., 
{\it Positivity of direct image sheaves -- a geometric point of view},
notes of the talk at the workshop ``Rencontre positivit\'e'' in Rennes,
2008.

\bibitem[Ka1]{Ka1}
Kawamata Y.,
{\it Characterization of abelian varieties},
Compositio math.\ 
{\bf 43}
(1981) 
253--276.

\bibitem[Ka2]{Ka2}
Kawamata Y.,
{\it Kodaira dimension of algebraic fiber spaces over curves},
Invent.\ math.\ 
{\bf 66}
(1982) 
57--71.

\bibitem[Ka3]{Ka3}
Kawamata Y.,
{\it Kodaira dimension of certain algebraic fiber spaces}, 
J.\ Fac.\ Sic.\ Univ.\ Tokyo
{\bf 30}
(1983) 
1--24.

\bibitem[Ko1]{Ko1} 
Koll\'ar J.,
{\it Higher direct images of dualizing sheaves. {I}}, 
Ann.\ of Math.\
{\bf 123}
(1986)
11--42.
{\it Higher direct images of dualizing sheaves. {II}}, 
Ann.\ of Math.\ 
{\bf 124}
(1986)
171--202.

\bibitem[Ko2]{Ko2}
Koll\'ar J.,
{\it Kodaira's canonical bundle formula and adjunction},
Chapter 8 in {\it Flips for 3-folds and 4-folds},
ed.\ by Corti A., 
(2007).

\bibitem[Mw]{Mw}
Moriwaki A.,
{\it Torsion freeness of higher direct images of canonical bundles},
Math.\ Ann.\
{\bf 276} 
(1987) 
385--398.

\bibitem[M]{M}
Mourougane Ch., 
{\it Images directes de fibr\'es en droites adjoints}, 
Publ.\ RIMS
{\bf 33} 
(1997) 
893--916.

\bibitem[MT1]{MT1}
Mourougane Ch.\ - Takayama S.,
{\it Hodge metrics and positivity of direct images},
J.\ Reine Angew.\ Math.
{\bf 606} 
(2007) 
167--178.

\bibitem[MT2]{MT2}
Mourougane Ch.\ - Takayama S.,
{\it Hodge metrics and the curvature of higher direct images},
arXiv:0707.3551 [mathAG],
to appear in Ann.\ Sci.\ \'Ecole Norm.\ Sup.\ (4).

\bibitem[MT3]{MT3}
Mourougane Ch.\ - Takayama S.,
{\it Extension of twisted Hodge metrics for K\"ahler morphisms},
2008.

\bibitem[N1]{N1}
Nakayama N.,
{\it Hodge filtrations and the higher direct images of canonical sheaves},
Invent.\ Math.\
{\bf 85}
(1986)
217--221.

\bibitem[N2]{N2}
Nakayama N.,
{\it Zariski-decomposition and abundance},
MSJ Memoirs {\bf 14}, Math.\ Soc.\ Japan,
2004.

\bibitem[Oh]{Oh}
Ohsawa T.,
{\it Vanishing theorems on complete K\"ahler manifolds},
Publ.\ RIMS.\
{\bf 20} 
(1984) 
21--38.

\bibitem[Tk]{Tk}
Takegoshi K.,
{\it Higher direct images of canonical sheaves tensorized with
semi-positive vector bundles by proper K\"ahler morphisms},
Math.\ Ann.\ 
{\bf 303} 
(1995) 
389--416.

\bibitem[Ts]{Ts}
Tsuji H.,
{\it Variation of Bergman kernels of adjoint line bundles},
arXiv:math.CV/0511342.

\bibitem[Vi1]{Vi1}
Viehweg E.,
{\it Weak positivity and the additivity of the Kodaira 
dimension for certain fibre spaces},
In: Algebraic Varieties and Analytic Varieties,
Advanced Studies in Pure Math.\ {\bf 1}
(1983)
329--353.

\bibitem[Vi2]{Vi2}
Viehweg E.,
{\it Quasi-projecitve moduli for polarized manifolds},
Ergebnisse der Math.\ und ihrer Grenzgebiete
3. Folge, Band {\bf 30},
A Series of Modern Surveys in Mathematics,
Springer.
(1995).

\bibitem[Z]{Z} 
Zucker S., 
{\it Remarks on a theorem of Fujita},
J.\ Math.\ Soc.\ Japan 
{\bf 34}
(1982)
47--54.

\end{thebibliography}
\end{document}